\renewcommand\title[1]{\gdef\@title{\reset@font\Large\bfseries #1}}
\renewcommand\section{\@startsection {section}{1}{\z@}%
                                   {-3.5ex \@plus -1ex \@minus -.2ex}%
                                   {2.3ex \@plus.2ex}%
                                   {\normalfont\large\bfseries}}
\renewcommand\subsection{\@startsection{subsection}{2}{\z@}%
                                     {-3ex\@plus -1ex \@minus -.2ex}%
                                     {1.5ex \@plus .2ex}%
                                     {\normalfont\normalsize\bfseries}}
\renewcommand\subsubsection{\@startsection{subsubsection}{3}{\z@}%
                                     {-2.5ex\@plus -1ex \@minus -.2ex}%
                                     {1.5ex \@plus .2ex}%
                                     {\normalfont\normalsize\bfseries}}
\def\@runningauthor{}\newcommand{\runningauthor}[1]{\def\runningauthor{#1}}
\def\@runningtitle{}\newcommand{\runningtitle}[1]{\def\runningtitle{#1}}
\renewcommand{\ps@plain}{%
\renewcommand{\@evenfoot}{\footnotesize \hfill\thepage}
\renewcommand{\@oddfoot}{\footnotesize  \hfill\thepage}
\renewcommand{\@evenhead}{\footnotesize\scshape \hfill\runningauthor\hfill}
\renewcommand{\@oddhead}{\footnotesize\scshape \hfill\runningtitle\hfill}}
\g@addto@macro\bfseries{\boldmath}
\theoremstyle{plain}
\newtheorem{theorem}{Theorem}
\newtheorem{lem}[theorem]{Lemma}
\newtheorem{cor}[theorem]{Corollary}
\newtheorem{prop}[theorem]{Proposition}
\theoremstyle{definition}
\newtheorem{defn}[theorem]{Definition}
\newtheorem{exmp}[theorem]{Example}
\theoremstyle{remark}
\newtheorem{rem}[theorem]{Remark}
\title{Characterisation of the parameters of maximum weight spectrum codes according to their spread}
\runningtitle{Characterisation of the parameters of MWS codes according to their spread}
\author{Alessio Meneghetti\\
\small Department of Mathematics\\[-0.8ex]
\small University of Trento\\[-0.8ex] 
\small Trento, Italy\\
\small\tt alessio.meneghetti@unitn.it\\
\and
Wrya K. Kadir \\
\small Department of Informatics\\[-0.8ex]
\small University of Bergen\\[-0.8ex]
\small Bergen, Norway\\
\small\tt wrya.kadir@uib.no
}
\runningauthor{A.\ Meneghetti, W.\ K.\ Kadir }
\date{}
\begin{document}

\maketitle

\thispagestyle{empty}

\begin{abstract}
We introduce the concept of spread of a code, and we specialize it to the case of maximum weight spectrum (MWS) codes. We classify two newly-defined sub-families of MWS codes according to their weight distributions, and completely describe their fundamental parameters. We focus on one of these families, the strictly compact MWS codes, proving their optimality as MWS codes and linking them to known codes.
\end{abstract}

%%%%%%%%%%%%%%%%%%%%%%%%%%%%%%%%%%%%%%%%%%%%%%%%%%%%%%%

\section{Introduction}
\label{sec: intro}

Let $\mathbb{F}_q$ be a finite field with $q$ elements. A $[n,k]_q$ linear code is a $k$-dimensional subspace of $\mathbb{F}_q^n$. The (Hamming) weight $w(x)$ of a vector $x\in \mathbb{F}_q^n$ is defined to be the number of nonzero components of $x$. The minimum of weights where $x\neq 0$ is the minimal distance $d$ of the code. A linear code $[n,k]_q$ whose the minimum distance is $d$ shall be denoted by $[n,k,d]_q$. A generator matrix for an $[n,k]_q$ linear code $C$ is a $k\times n$ matrix over $\mathbb{F}_q$ whose row vectors generate $C$. Let $(n+1)$-tuples $\{A_0,A_1,\ldots,A_n\}$ be the weight distribution of an $[n,k,d]_q$ linear code $C$ where $A_i$ is the number of codewords in $C$ with weight $i$. We denote by $S(C)$ the set of non-zero weights of a linear code $C$, i.e. $s\in S(C)$ if there exists $c\in C\setminus\{0\}$ such that $\mathrm{w}(c)=s$.

The weight distribution of linear codes has been appeared in many works over the years. Mac Williams in \cite{MacWIlliam63} constructed linear codes by employing the elements of the given weight set. He also proved that the weight set of a linear code can be derived from the weight set of its dual. Delsarte in \cite{DELSARTE1973} discussed some properties of codes using their weight distributions. In particular an upper bound $$|C|\leq \sum_{i=0}^{r}\binom{n}{i}(q-1)^i$$ on the size of a $q$-ary code $C$ with $r$ nonzero distinct weights was proposed. In addition, a lower bound on the size of $C$ was obtained using the number of distinct nonzero weights in its dual code $C^{\perp}$. More bounds on $|C|$ can be found in \cite{Enomoto-distance}.

The concept of binary linear codes with distinct weights has been firstly proposed and fully characterised in \cite{haily2015automorphism}. In this work, the authors define a \textit{distinct weight} (DW) code as a binary code whose weight distribution is of the form $A_i\in\{0,1\}$ for each $i$. Other than providing an infinite family of DW codes they study the automorphism group of DW codes.
\begin{defn}\label{def: binary MWS}
We denote by $B_k$ the $[2^k-1,k,1]_2$ DW code defined by the generator matrix
$$G=
\begin{bmatrix}
1_1^{(k)}\\
1_2^{(k)}\\
\vdots\\
1_k^{(k)}
\end{bmatrix}
$$
where $1_{i}^{(k)}$ for $i=1,\ldots,k$ is the $(2^k-1)$-bits row vector whose first $2^i-1$ bits are equal $1$, the remaining are equal 0, e.g. 
$$
\left\{
\begin{array}{c}
1_1^{(3)}=(1,0,0,0,0,0,0)\\
1_2^{(3)}=(1,1,1,0,0,0,0)\\
1_3^{(3)}=(1,1,1,1,1,1,1)
\end{array}
\right.
\Rightarrow
\begin{bmatrix}
1_1^{(3)}\\
1_2^{(3)}\\
1_3^{(3)}
\end{bmatrix}
=
\begin{bmatrix}
1&0&0&0&0&0&0\\
1&1&1&0&0&0&0\\
1&1&1&1&1&1&1
\end{bmatrix}
.$$  
\end{defn}
Due to \cite{haily2015automorphism}, there exist binary linear codes in which no two distinct codewords have the same Hamming weight and they are called distinct weight codes. Recently, independently from \cite{haily2015automorphism}, the authors of \cite{shi2019many} proposed the problem of determining the maximum possible number of distinct weights in a block code over any finite field $\mathbb{F}_q$. Other than providing a complete solution in the general nonlinear $q$-ary case, they showed a construction for binary linear codes attaining the maximum possible number of distinct weights. This family coincides with the family $B_k$ recalled in Definition \ref{def: binary MWS}. In the same work, the authors proposed a bound and conjectured that the maximum number of distinct nonzero weights that a $k$-dimensional $q$-ary code can have is $\frac{q^k-1}{q-1}$. This work opened several lines of research, starting from two independent works providing solutions for the conjecture \cite{alderson-neri2019maximum}, \cite{meneghetti2018linear}. Shorter codes with maximum number of weights for a given dimension were later discussed in \cite{alderson2019note,Cohen-tilhuizen}. In \cite{Shi-sole-cyclic}, the authors discussed the largest number of nonzero weights a cyclic code of dimension $k$ over $\mathbb{F}_q$ can have. 
% The authors of \cite{wrya-Neri-li} provided the complete solution for the problem of determining the maximum possible number of distinct weights in linear rank metric codes.

\begin{theorem}\cite{shi2019many}\label{theorem: max number distinct weights}
Let $C$ be a $k$-dimensional $q$-ary linear code. Then the maximum possible number $q_k$ of distinct non-zero weights in $C$ equals $q_k=\frac{q^k-1}{q-1}$.
\end{theorem}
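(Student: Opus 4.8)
The plan is to prove the two bounds $|S(C)| \le \frac{q^k-1}{q-1}$ (valid for every $k$-dimensional code) and $|S(C)| = \frac{q^k-1}{q-1}$ for a suitably chosen code. For the upper bound, the key observation is that the Hamming weight is invariant under scalar multiplication: for every nonzero $c\in C$ and every $\lambda\in\mathbb{F}_q\setminus\{0\}$ the vectors $c$ and $\lambda c$ have the same support, so $w(\lambda c)=w(c)$. I would therefore let $\mathbb{F}_q^*$ act on $C\setminus\{0\}$ by scaling; since $c\ne 0$ each orbit $\{\lambda c : \lambda\in\mathbb{F}_q^*\}$ has exactly $q-1$ elements, so the $q^k-1$ nonzero codewords split into $\frac{q^k-1}{q-1}$ orbits, which are precisely the one-dimensional subspaces (projective points) of $C$. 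As the weight is constant on each orbit, $|S(C)|$ is at most the number of orbits, giving the claimed upper bound.

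For achievability I would exhibit a $k$-dimensional code whose weight function is injective on projective points. In the binary case the family $B_k$ of Definition \ref{def: binary MWS} already does this, so it suffices to treat general $q$. My approach is a \emph{weighted simplex} construction: list the $N=\frac{q^k-1}{q-1}$ projective points $P_1,\dots,P_N$ of $\mathbb{F}_q^k$, fix a representative vector $v_i$ of each, and take as generator matrix $G$ the matrix whose columns are the $v_i$ repeated $m_i$ times, for multiplicities $m_1,\dots,m_N$ to be chosen. For a message $x\in\mathbb{F}_q^k\setminus\{0\}$ one has $w(xG)=M-\sum_{i:\,\langle x,v_i\rangle=0} m_i$, where $M=\sum_i m_i$ is the total length; that is, the weight of a codeword is determined by the sum of the multiplicities of the projective points lying on the hyperplane $x^\perp$. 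Since each hyperplane meets the point set in exactly $\frac{q^{k-1}-1}{q-1}$ points, distinct weights for all projective points are obtained as soon as the multiplicity vector $(m_i)$ has pairwise distinct sums over the $N$ hyperplanes of $\mathbb{F}_q^k$.

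The routine part is the upper bound; the real work is the achievability, and specifically proving that a concrete choice of multiplicities yields pairwise distinct hyperplane sums, equivalently that the weight map is injective on projective points. I expect this to be the main obstacle: one must select $(m_i)$—for instance by a rapidly growing, Sidon-type or greedy assignment that prevents any two hyperplane sums from coinciding—and then verify the distinctness while controlling the resulting length $n=M$. Once such multiplicities are in place, the code is $k$-dimensional and attains exactly $\frac{q^k-1}{q-1}$ nonzero weights, matching the upper bound and establishing equality.
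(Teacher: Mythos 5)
Your upper bound argument is complete and correct: weights are constant on $\mathbb{F}_q^*$-orbits, the orbits are the projective points of $C$, and there are exactly $\frac{q^k-1}{q-1}$ of them. But your achievability half has a genuine gap: you reduce everything to the existence of multiplicities $(m_i)$ whose sums over the $q_k$ hyperplanes are pairwise distinct, and then explicitly leave that step unexecuted (``one must select $(m_i)$\dots and then verify the distinctness''). As written, the theorem is not proved, because the entire difficulty of the result --- this was precisely the part conjectured in \cite{shi2019many} and settled later in \cite{alderson-neri2019maximum} and \cite{meneghetti2018linear} --- lives in that step.

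The gap does close, and with the ``rapidly growing'' choice you hint at: take $m_i=2^{i-1}$ for $i=1,\ldots,N$ with $N=q_k$. A hyperplane of $\mathbb{F}_q^k$ is the union of the $1$-dimensional subspaces it contains, so distinct hyperplanes determine distinct subsets of $\{P_1,\ldots,P_N\}$; and distinct subsets of $\{2^0,2^1,\ldots,2^{N-1}\}$ have distinct sums by uniqueness of binary representation. Hence the $N$ hyperplane sums $\sum_{i:\,P_i\subseteq H}m_i$ are pairwise distinct, your weight formula $w(xG)=M-\sum_{i:\langle x,v_i\rangle=0}m_i$ gives exactly $N$ distinct nonzero weights, and $G$ has rank $k$ since its columns span $\mathbb{F}_q^k$. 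This yields length $n=2^{q_k}-1$, which is exactly the geometric construction of \cite[Theorem 3.4]{alderson-neri2019maximum} that the paper itself recalls in its ``Known codes'' section; note the paper offers no proof of the theorem (it is cited), so your route, once completed this way, is essentially the standard one rather than an alternative.
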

Following the notation established in \cite{alderson-neri2019maximum}, these codes are now known as \textit{maximum weight spectrum} (MWS) codes. Observe that DW codes are a particular case of MWS codes.

We recall another important example of MWS code. 
% citation needed: it should be already present in literature
\begin{defn}\label{def: q 2 MWS}
We denote with $D_q$ the $[\frac{q(q+1)}{2},2,\frac{q(q-1)}{2}]_q$ code generated by 
$$G'=
\begin{bmatrix}
\underbrace{\begin{matrix}1\\\alpha^1\end{matrix}}_1&\underbrace{\begin{matrix}1&1\\\alpha^2&\alpha^2\end{matrix}}_{2}&\begin{matrix}\cdots\\\end{matrix}& \underbrace{\begin{matrix}1&\cdots&1\\\alpha^{q-1}&\cdots&\alpha^{q-1}\end{matrix}}_{q-1\;\mathrm{times}}&\underbrace{\begin{matrix}1&\cdots&1\\0&\cdots&0\end{matrix}}_{q\;\mathrm{times}}
\end{bmatrix} ,
$$
where $\alpha$ is a primitive element of $\mathbb{F}_q$. 
\end{defn}
Observe that for $k=2$ and $q=2$, the codes $B_k$ and $D_{q}$ coincide. This is a hint that there exists a family $\mathcal{SC}_{q,k}$ of MWS codes sharing the same structure and properties.

Subsequent works propose several constructions for new families of codes \cite{alderson-neri2019maximum,Cohen-tilhuizen,patanker2019weight}, trying in particular to obtain tight bounds on the minimum possible length of MWS codes. Another related open question is whether, given two integers $k$ and $s$, there exists a linear code with dimension $k$ and exactly $s$ distinct weights. A positive answer has been provided in \cite{meneghetti2018linear} for the particular case of binary codes, while the $q$-ary case is left as a conjecture. 

Due to \cite{alderson-neri2019maximum}, the minimum possible length of an $[n,k,d]_q$ MWS code is $n=\frac{q}{2}\cdot\frac{q^k-1}{q-1}$. An $[n,k,d]_q$ MWS code achieves this length if it has  codewords of every Hamming weight from $d$ to $n$ and we call it \textit{strictly compact} (Definition \ref{def: compact}) MWS code. An $[m,l,s]_q$ \textit{compact} (Definition \ref{def: compact}) MWS code has codewords of every Hamming weight from $s$ to $s+\frac{q^l-1}{q-1}-1$.

In this work we investigate the parameters of MWS codes according to new classifications. The properties and parameters of MWS codes, strictly compact MWS codes and compact MWS codes are discussed. In particular, the codes $B_k$ and $D_q$ are strictly compact, and, as all strictly compact codes, they are optimal MWS codes (we will prove this in Corollary \ref{cor: strictly optimal}).  We introduce the concept of \textit{spread} (Definition \ref{def: spread}) of a code, a sort of distance between the weight distribution of a code from a reference one.  We use strictly compact codes as a reference code to measure the spread of MWS codes, namely, the spread of a MWS code $C$ is the distance between its weight distribution and the one of a hypothetical strictly compact MWS code with equal length (see Definition \ref{def: spread MWS}). This choice is based upon the optimality of strictly compact codes, and it turns out that the spread is deeply linked with the length of MWS. Moreover, we investigate bounds on the minimum distance and the spread of known MWS codes.

\section{Notation and remarks}
\label{sec: notation}

In this section we discuss parameters of MWS codes according to their weight distribution. In particular we will see how length, dimension and minimum distance of MWS codes are related to their weight distribution. Particular emphasis will be put on the parameters of the family $\mathcal{SC}_{q,k}$ briefly introduced in Section \ref{sec: intro}.
 The following lemma is a direct consequence of the definition of linear codes, hence we recall it without providing a proof.
\begin{lem}\label{lem: link n weights}
Let $\{A_i\}_{i\in\{0,\ldots,n\}}$ be the weight distribution of an $[n,k,d]_q$ linear code $C$. If $C$ has no coordinate identically equal to $0$ (\textit{non-degenerate}), then 
$$n=\dfrac{\sum_i{A_i}i}{q^{k}-q^{k-1}}.$$
\end{lem}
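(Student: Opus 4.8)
The plan is to compute the quantity $\sum_i A_i i$ in two different ways and compare them, using a standard double-counting argument over codewords and coordinate positions. First I would observe that $\sum_i A_i i = \sum_{c \in C} w(c)$, since each codeword of weight $i$ contributes exactly $i$ to the sum and there are $A_i$ such codewords. Writing $w(c)$ as the number of nonzero entries of $c$, this total equals the number of pairs $(c,j)$ with $c \in C$ and nonzero $j$-th coordinate.

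Next I would count these pairs by grouping them according to the coordinate $j \in \{1,\ldots,n\}$ rather than according to $c$. For each $j$ let $\pi_j : C \to \mathbb{F}_q$ be the projection sending a codeword to its $j$-th entry; this is a linear functional on $C$. Grouping by $j$ gives
$$\sum_{c \in C} w(c) = \sum_{j=1}^{n} \#\{c \in C : \pi_j(c) \neq 0\},$$
so it suffices to show that each inner cardinality equals $q^{k} - q^{k-1}$, independently of $j$.

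The key step, where the non-degeneracy hypothesis enters, is to evaluate $\#\{c \in C : \pi_j(c) \neq 0\}$. Since coordinate $j$ is not identically zero, $\pi_j$ is a nonzero linear functional, hence surjective onto the one-dimensional space $\mathbb{F}_q$; its kernel therefore has dimension $k-1$ and cardinality $q^{k-1}$. Consequently the number of codewords with nonzero $j$-th entry is $|C| - |\ker \pi_j| = q^{k} - q^{k-1}$ for every $j$. Summing over the $n$ positions yields $\sum_i A_i i = n(q^{k} - q^{k-1})$, and dividing by $q^{k} - q^{k-1}$ gives the claimed formula.

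The only subtle point, and the place where care is needed, is recognising that non-degeneracy is precisely what guarantees each $\pi_j$ is nonzero, so that every fibre has the uniform size $q^{k-1}$ and each coordinate contributes the same count $q^{k}-q^{k-1}$. Without this hypothesis a column identically zero would contribute $0$ instead, and the uniform count—hence the formula—would break down. Beyond this observation there is no genuine computational obstacle.
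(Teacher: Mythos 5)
Your proof is correct: the double-counting of pairs $(c,j)$ together with the observation that non-degeneracy makes each coordinate projection $\pi_j$ a surjective linear functional with kernel of size $q^{k-1}$ is exactly the standard argument. The paper itself states this lemma without proof, calling it ``a direct consequence of the definition of linear codes,'' and your argument is precisely the one being implicitly invoked, so there is no divergence to report.
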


We introduce some useful definition to classify MWS codes according to their weight distribution.
\begin{defn}\label{def: compact}
An MWS $[n,k,d]_q$ code $C$ is \textit{compact} if its set of weights $S(C)$ is
$
S(C)=\{d,d+1,\ldots,d+{q}_k-1\}
$
 where $q_k=\frac{q^k-1}{q-1}$, and it is \textit{strictly compact} if it is compact and $n\in S(C)$.
\end{defn}
\begin{prop}\label{prop-example SCMWS}
$B_{k}$ and $D_{q}$ in Definitions \ref{def: binary MWS} and \ref{def: q 2 MWS} are strictly compact for respectively any choice of $k$ and $q$. In particular 
$$
S(B_k)=\{1,\ldots,n\}=\{\;\underbrace{1\;,\;2\;,\;\ldots\;,\;2^k-2\;,\;2^k-1}_{{q}_k\;\mathrm{consecutive}\;\mathrm{integers}}\;\} ,
$$
and 
$$
S(D_q)=\{d,\ldots,n\}=\bigg\{\;\underbrace{\frac{q(q-1)}{2}\;,\;\ldots\;,\;\frac{q(q+1)}{2}}_{{q}_k\;\mathrm{consecutive}\;\mathrm{integers}}\;\bigg\} .
$$
\end{prop}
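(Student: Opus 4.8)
The plan is to verify directly, from the two generator matrices, the three requirements of Definition~\ref{def: compact}: that each code attains $q_k$ distinct nonzero weights, that these form a set of $q_k$ consecutive integers starting at $d$, and that $n$ is itself a weight. A single preliminary remark serves both codes: a codeword and its nonzero scalar multiples share the same support, hence the same weight, so it suffices to compute the weight of one representative of each of the $q_k$ projective points of $\mathbb{F}_q^k$. If these $q_k$ weights turn out to be pairwise distinct and consecutive, all three conditions follow simultaneously, and the count $q_k$ certifies via Theorem~\ref{theorem: max number distinct weights} that the code is MWS.

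For $B_k$ I would exploit the nested supports of the rows. The coordinates split into annuli $A_\ell=\{2^{\ell-1},\ldots,2^\ell-1\}$ of size $2^{\ell-1}$, where $A_\ell$ is covered by exactly the rows $1_\ell^{(k)},\ldots,1_k^{(k)}$. A codeword indexed by a subset $I\subseteq\{1,\ldots,k\}$ is therefore constant on each $A_\ell$, with value $b_\ell:=|I\cap\{\ell,\ldots,k\}|\bmod 2$, so its weight is $\sum_{\ell=1}^{k}b_\ell\,2^{\ell-1}$, i.e.\ the integer whose binary digits are $b_k\cdots b_1$. The key point is that the suffix-parity map $I\mapsto(b_1,\ldots,b_k)$ is an $\mathbb{F}_2$-linear bijection of $\mathbb{F}_2^k$, since it is triangular with unit diagonal; hence $(b_1,\ldots,b_k)$ ranges over all of $\mathbb{F}_2^k$ as $I$ varies, and the weights range over $\{0,1,\ldots,2^k-1\}$. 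Discarding the zero codeword leaves exactly $S(B_k)=\{1,\ldots,2^k-1\}=\{1,\ldots,n\}$, which is strictly compact.

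For $D_q$ we have $k=2$ and $q_k=q+1$. From $G'$ the codeword $a\cdot(\text{row }1)+b\cdot(\text{row }2)$ takes the value $a+b\alpha^i$ on the $i$-th block of $i$ columns (for $i=1,\ldots,q-1$) and the value $a$ on the final block of $q$ columns; its weight is thus the sum of the block lengths $i$ over those $i$ with $a+b\alpha^i\neq 0$, increased by $q$ when $a\neq 0$. Taking the representatives $(1,0)$, $(0,1)$ and $(t,1)$ with $t\in\mathbb{F}_q^{\ast}$ of the $q+1$ projective points, a short computation gives weight $\tfrac{q(q+1)}{2}=n$ for $(1,0)$ and weight $\tfrac{q(q-1)}{2}=d$ for $(0,1)$. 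For $(t,1)$ with $t\neq 0$, primitivity of $\alpha$ guarantees a unique index $i_0\in\{1,\ldots,q-1\}$ with $\alpha^{i_0}=-t$; this is the only vanishing block, so the weight is $\tfrac{q(q+1)}{2}-i_0$. As $t$ runs through $\mathbb{F}_q^{\ast}$ the index $i_0$ runs bijectively through $\{1,\ldots,q-1\}$, filling in every integer strictly between $d$ and $n$ and yielding $S(D_q)=\{d,\ldots,n\}$.

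I expect the $D_q$ case to be the main obstacle, precisely in the bookkeeping that the $q+1$ weights are pairwise distinct and exhaust $\{d,\ldots,n\}$ with no gaps or repetitions. This is exactly where primitivity of $\alpha$ is indispensable: it forces $\{\alpha^1,\ldots,\alpha^{q-1}\}=\mathbb{F}_q^{\ast}$, so that the vanishing-block indices $i_0$ biject with $\{1,\ldots,q-1\}$ rather than merely covering a subset. For $B_k$ the only real content is recognising the suffix-parity bijection; once that is established, consecutiveness and the membership $n\in S(B_k)$ are immediate.
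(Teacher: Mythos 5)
Your proposal is correct: both weight computations check out (the suffix-parity bijection argument for $B_k$ and the unique-vanishing-block argument for $D_q$ both give exactly $q_k$ pairwise distinct, consecutive weights ending at $n$), and this is precisely the direct computation that the paper's proof invokes without detail, since its entire proof reads ``It follows from computation.'' In effect you have supplied the verification the paper omits, so the approach is the same; no gaps.
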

\begin{proof}
It follows from computation. 
\end{proof}
Equivalent to Definition \ref{def: compact}, an MWS is compact if its weight distribution is
$$\{A_i\}_{i\in\{0,\ldots,n\}}=\{1,\;\underbrace{0,\ldots,0}_{1\leq i\leq d-1}\;,\;\underbrace{q-1,\ldots,q-1}_{d\leq i\leq d+q_k-1}\;,\;\underbrace{0,\ldots,0}_{i>d+q_k-1}\}$$
% \begin{itemize}
% \item $A_0=1$,
% \item $A_i=0$ for each $i<d$,
% \item $A_i=q-1$ for each $d\leq i\leq d+{q}_k-1$,
% \item $A_i=0$ for each $i>d+{q}_k-1$,
% \end{itemize}
while it is strictly compact if
$$\{A_i\}_{i\in\{0,\ldots,n\}}=\{1,\;\underbrace{0,\ldots,0}_{1\leq i\leq d-1}\;,\;\underbrace{q-1,\ldots,q-1}_{i\geq d}\;\}.$$
% \begin{itemize}
% \item $A_0=1$,
% \item $A_i=0$ for each  $i<d$,
% \item $A_i=q-1$ for each $i\ge d$.
% \end{itemize}
Observe that for a strictly compact MWS we can write  
$$
S(C)=\{d,d+1,\ldots,n\}=\left\{n-{q}_k+1, \ldots, n-1,n\right\}=\{n-i \mid i=0,1,\ldots,{q}_k-1\}\;,
$$
and in the general case, we can write the set of weights of an MWS code as
\begin{equation}\label{eq: def sc}
S(C)=\{n-s_0,\ldots,n-s_{{q}_k-1}\} .
\end{equation}
A strictly compact MWS code with $d=1$ is called \textit{full weight spectrum} (FWS) code in \cite{alderson2019note}.
\begin{defn}\label{def: spread}
Let $C$ and $\bar{C}$ be two $q$-ary codes with equal length $n$ and dimension $k$. We define the \textit{spread} of $C$ w.r.t a target code $\bar{C}$ as the value
$$
\Delta_{\bar{C}}(C)=\frac{1}{q-1}\sum_{i=1}^M\left(\bar{w}_i-w_i\right)\;,
$$
where $\{w_1,\ldots,w_M\}$ and  $\{\bar{w}_1,\ldots,\bar{w}_M\}$ are the set of all non-zero weights of $C$ and $\bar{C}$.
\end{defn}
We remark that $\Delta_{\bar{C}}(C)$ is therefore equal to $\frac{1}{q-1}\left(\sum_{i=1}^ni\bar{A}_i-\sum_{j=1}^njA_j\right)$, with $\bar{A}_i$ and $A_j$ the weight distributions of  $\bar{C}$ and $C$ respectively.  Observe that $\Delta_{\bar{C}}(C)=-\Delta_C(\bar{C})$.
\\
For our purposes, we specialize Definition \ref{def: spread} to the case of MWS codes, using as a target code a hypothetical strictly compact MWS code. In this case we can therefore omit to specify the target code, since given the $[n,k]_q$ MWS code $C$ we know exactly the weight distribution of a strictly compact $[n,k]_q$ MWS code.

\begin{defn}\label{def: spread MWS}
The \textit{spread} of an MWS code $C$ with $S(C)=\{n-s_0,\ldots,n-s_{{q}_k-1}\}$ is the value
\begin{equation}\label{eq: def delta}
\Delta(C)=(s_0-0)+(s_1-1)+\cdots+ (s_{{q}_k-1}-{q}_k+1)=\sum_{i=0}^{{q}_k-1}(s_i-i) .
\end{equation}
\end{defn}
With this definition, $\Delta(C)$ can be thought as a measure of how much the weight distribution of an MWS code is spread across the entire set $\{1,\ldots,n\}$, in terms of the distance from the weight distribution of a hypothetical strictly compact MWS code of length $n$. Due to Definition \ref{def: spread MWS}, we can equivalently define a strictly compact MWS code $C$ as an MWS code with spread $\Delta(C)=0$. As we will see, strictly compact MWS codes are optimal codes, namely, there exist no MWS code with equal dimension and length strictly less than their length. To prove this claim, the next section deals with the characterisation of the parameters of strictly compact MWS codes and their comparison with general MWS codes.

\section{Strictly Compact MWS codes}
In this section we use the notation introduced in previous section to discuss the parameters of MWS codes, with a focus on strictly compact codes. We prove in particular the link between length  and spread of MWS codes, implying the optimality of strictly compact codes.
\begin{theorem}
\label{theorem: parameters}
Let $\mathcal{SC}_{q,k}$ be a strictly compact MWS code of dimension $k$ over $\mathbb{F}_q$. Then its parameters are 
$$
\left[\;\frac{q}{2}\;{q}_k\;,\;k\;,\;\left(\frac{q}{2}-1\right){q}_k+1\;\right]_q .
$$
\end{theorem}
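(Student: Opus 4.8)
The plan is to read off the weight distribution of a strictly compact MWS code directly from Definition~\ref{def: compact} and feed it into Lemma~\ref{lem: link n weights} to pin down $n$, then recover $d$ from the relation between $d$, $n$ and $q_k$. The dimension is $k$ by hypothesis, so the only real work is computing the length and the minimum distance.

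First I would record that for a strictly compact MWS code $C$ we have $A_0 = 1$, $A_i = q-1$ for $d \le i \le n$, and $A_i = 0$ otherwise, with exactly $q_k = \frac{q^k-1}{q-1}$ nonzero weights; since these weights are the consecutive integers $d, d+1, \ldots, n$, this forces $n = d + q_k - 1$. Before applying Lemma~\ref{lem: link n weights} I must check non-degeneracy: because $n \in S(C)$ there is a codeword of weight $n$, i.e.\ a codeword with every coordinate nonzero, so no coordinate can be identically zero and the hypothesis of the lemma is met. As a consistency check, $\sum_i A_i = 1 + (q-1)q_k = q^k$, which matches $|C|$.

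Next I would apply the lemma. Writing $\sum_i A_i\, i = (q-1)\sum_{i=d}^{n} i$ and evaluating the arithmetic series over the $q_k$ consecutive values $d, \ldots, d+q_k-1$ gives $\sum_i A_i\, i = (q-1)q_k\bigl(d + \tfrac{q_k-1}{2}\bigr)$. Dividing by $q^k - q^{k-1} = q^{k-1}(q-1)$ and substituting $d = n - q_k + 1$, the bracket collapses to $n - \tfrac{q_k-1}{2}$, and I obtain the single linear equation $q^{k-1} n = q_k\bigl(n - \tfrac{q_k-1}{2}\bigr)$.

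Finally I would solve for $n$. Collecting terms gives $n\,(q_k - q^{k-1}) = \tfrac{1}{2} q_k (q_k - 1)$, and here the two identities $q_k - q^{k-1} = q_{k-1}$ and $q_k - 1 = q\,q_{k-1}$ (both immediate from $q_k = \frac{q^k-1}{q-1}$) turn this into $n\, q_{k-1} = \tfrac{q}{2}\, q_k\, q_{k-1}$, whence $n = \tfrac{q}{2}\, q_k$. Then $d = n - q_k + 1 = \bigl(\tfrac{q}{2}-1\bigr)q_k + 1$, giving the stated parameters. The computation is entirely elementary; the only place demanding care is the bookkeeping with the two $q_{k-1}$ identities, which is where an arithmetic slip would most easily hide.
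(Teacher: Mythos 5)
Your proposal is correct and follows essentially the same route as the paper: both read off the strictly compact weight distribution, apply Lemma~\ref{lem: link n weights}, and solve the resulting linear equation for $n$ via the identities $q_k - q^{k-1} = q_{k-1}$ and $q_k - 1 = q\,q_{k-1}$, then recover $d = n - q_k + 1$. Your explicit check of non-degeneracy (via the existence of a weight-$n$ codeword) is a small detail the paper leaves implicit, but it does not change the nature of the argument.
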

\begin{proof}
A strictly compact $[n,k,d]_q$ MWS code $C$ is by definition a code with spread $\Delta(C)=0$. In particular 
\begin{equation}\label{eq: sc}
S(C)=\{n, n-1, \ldots, n-{q}_k+1\} ,\;\;\mathrm{and}\;\;  A_i=q-1, i\in S(C). 
\end{equation}

By Lemma \ref{lem: link n weights} we have $n=\frac{\sum_i{A_i}i}{q^{k}-q^{k-1}}$, which, due to Equation \eqref{eq: sc}, can be written as
$$
n=
\frac{
\sum_{i=0}^{{q}_k-1}{(q-1)}\left(n-i\right)
}{
q^{k}-q^{k-1}
}
=
\frac{
n\cdot {q}_k-\sum_{i=0}^{{q}_k-1}i
}{
q^{k-1}
}
=
\frac{
n\cdot {q}_k-\frac{\left({q}_k-1\right){q}_k}{2}
}{
q^{k-1}
} ,
$$
hence
\begin{equation}\label{eq: proof n strictly compact}
n\left( {q}_k-q^{k-1}\right)=\frac{\left({q}_k-1\right){q}_k}{2}
\end{equation}
We observe that on the left-hand side of Equation \eqref{eq: proof n strictly compact}, the coefficient of $n$  is ${q}_k-q^{k-1}={q}_{k-1}$, while on the right-hand side we have ${q}_k-1=q\cdot{q}_{k-1}$. By substitution, we deduce
\begin{equation}\label{eq: proof n strictly last}
n{q}_{k-1}=\frac{\left(q\cdot{q}_{k-1}\right){q}_k}{2} ,
\end{equation}
which leads to the claimed length of $C$. \\
The proof that $d=\left(\frac{q}{2}-1\right){q}_k+1$ is then a direct consequence of $C$ being a strictly compact MWS code, since in this case the minimum weight is $n-{q}_k+1$.
\end{proof}
\begin{exmp}
As expected, if we use $q=2$, then the parameters are $[2^k-1,k,1]_2$, namely the parameters of the known code $\mathcal{SC}_{2,k}$. Similarly, if we use $k=2$, the parameters listed in Theorem \ref{theorem: parameters} become $\left[\;\frac{q}{2}\;{q}_2\;,\;2\;,\;\left(\frac{q}{2}-1\right){q}_2+1\;\right]_q=\left[\;\frac{q}{2}\;(q+1)\;,\;2\;,\;\frac{q}{2}(q-1)\;\right]_q$, i.e. the parameters of $\mathcal{SC}_{q,2}$.
\end{exmp}
\begin{exmp}
Consider $q=4$ and $k=3$, namely the smallest case with $q$ even which are not covered by $\mathcal{SC}_{2,k}$ or $\mathcal{SC}_{q,2}$. If a strictly compact MWS code would exists, then it would be a $[42,3,22]_4$ code.
\end{exmp}

\begin{theorem}
\label{theorem: length spread}
Let $C$ be an MWS code. Then $n=\frac{q}{2}{q}_k+\frac{\Delta(C)}{{q}_{k-1}}.$
\end{theorem}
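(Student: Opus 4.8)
The plan is to mirror the computation used in the proof of Theorem \ref{theorem: parameters}, but now tracking the spread instead of assuming it to be zero. The crucial structural fact I would establish first is that in any MWS code each nonzero weight is attained by exactly $q-1$ codewords, i.e. $A_i = q-1$ for every $i \in S(C)$. This holds because the $q^k - 1$ nonzero codewords split into $q_k = \frac{q^k-1}{q-1}$ scalar-multiple classes, each class consisting of $q-1$ codewords of equal weight; since an MWS code realises exactly $q_k$ distinct nonzero weights, the pigeonhole principle forces each class to carry a distinct weight, so every weight in $S(C)$ is attained by precisely $q-1$ codewords. This is the same fact implicit in the remark following Definition \ref{def: spread}, where the weight-based and distribution-based forms of the spread agree only under this condition.

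Writing $S(C) = \{n - s_0, \ldots, n - s_{q_k - 1}\}$ as in Equation \eqref{eq: def sc}, I would then apply Lemma \ref{lem: link n weights}. With $A_i = q-1$ on $S(C)$ the numerator becomes $\sum_i A_i i = (q-1)\sum_{j=0}^{q_k-1}(n - s_j) = (q-1)\big(n\, q_k - \sum_j s_j\big)$, while the denominator is $q^k - q^{k-1} = (q-1) q^{k-1}$. Cancelling the factor $q-1$ and rearranging gives the clean identity $\sum_{j=0}^{q_k-1} s_j = n(q_k - q^{k-1}) = n\, q_{k-1}$, using $q_k - q^{k-1} = q_{k-1}$ exactly as in the proof of Theorem \ref{theorem: parameters}.

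Finally I would convert the sum of the $s_j$ into the spread. By Definition \ref{def: spread MWS}, $\Delta(C) = \sum_{j=0}^{q_k-1}(s_j - j) = \sum_j s_j - \frac{(q_k-1)q_k}{2}$, so $\sum_j s_j = \Delta(C) + \frac{(q_k-1)q_k}{2}$. Substituting the identity from the previous step and using $q_k - 1 = q\, q_{k-1}$, so that $\frac{(q_k-1)q_k}{2} = \frac{q}{2} q_{k-1} q_k$, yields $n\, q_{k-1} = \Delta(C) + \frac{q}{2} q_{k-1} q_k$; dividing through by $q_{k-1}$ gives the claimed $n = \frac{q}{2} q_k + \frac{\Delta(C)}{q_{k-1}}$.

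The only genuine subtlety, and thus the step I would treat most carefully, is the justification that $A_i = q-1$ for all nonzero weights; everything else is the same bookkeeping as in Theorem \ref{theorem: parameters}, now specialised with $\Delta(C) = 0$ replaced by a general $\Delta(C)$. I would also flag the implicit non-degeneracy hypothesis needed to invoke Lemma \ref{lem: link n weights}, which is harmless since any coordinate identically zero could be deleted without changing the weight set or the dimension, and without it the formula relating $n$ and $\Delta(C)$ would not be well posed.
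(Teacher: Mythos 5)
Your proof is correct and follows essentially the same route as the paper: both apply Lemma \ref{lem: link n weights} with the weight set written as in Equation \eqref{eq: def sc}, use the identities $q_k - q^{k-1} = q_{k-1}$ and $q_k - 1 = q\,q_{k-1}$, and recover the formula by isolating $\Delta(C)=\sum_j (s_j - j)$. The only difference is that you make explicit two points the paper leaves implicit — that $A_i = q-1$ for every $i \in S(C)$ via the scalar-multiple-classes pigeonhole argument, and the non-degeneracy hypothesis needed for Lemma \ref{lem: link n weights} — which is a welcome tightening rather than a different approach.
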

\begin{proof}We proceed similarly to the proof of Theorem \ref{theorem: parameters}, where Equation \eqref{eq: sc} is substituted by Equation \eqref{eq: def sc}. This implies, after some computation, that Equation \eqref{eq: proof n strictly compact} becomes
$$
n\left( {q}_k-q^{k-1}\right)=\frac{\left({q}_k-1\right){q}_k}{2}+
\sum_{i=0}^{{q}_k-1}\left(s_i-i\right).
$$
We recall that by Equation \eqref{eq: def delta} the sum on the right-hand side is $\Delta(C)$. In this way, instead of Equation \eqref{eq: proof n strictly last} we have  
$
n{q}_{k-1}=\frac{\left(q\cdot{q}_{k-1}\right){q}_k}{2} +\Delta(C)  .
$
\end{proof}

\begin{cor}\label{cor: strictly optimal}
Strictly compact MWS codes are optimal among MWS codes.
\end{cor}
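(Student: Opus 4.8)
The plan is to use Theorem~\ref{theorem: length spread} to turn the statement about lengths into a statement about the sign of the spread. Here ``optimal among MWS codes'' means that no MWS code of dimension $k$ has length strictly smaller than $\frac{q}{2}q_k$, the common length of the strictly compact MWS codes given by Theorem~\ref{theorem: parameters}. Since Theorem~\ref{theorem: length spread} gives $n=\frac{q}{2}q_k+\frac{\Delta(C)}{q_{k-1}}$ for every MWS code $C$, and $q_{k-1}>0$, it suffices to prove that $\Delta(C)\geq 0$ for every MWS code $C$, with equality exactly when $C$ is strictly compact.

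To establish $\Delta(C)\geq 0$ I would argue directly from the way the $s_i$ are defined in Equation~\eqref{eq: def sc}. Because $C$ is an MWS code it has exactly $q_k$ distinct non-zero weights, each an integer in $\{1,\ldots,n\}$. Listing these weights in strictly decreasing order as $n-s_0>n-s_1>\cdots>n-s_{q_k-1}$ forces $s_0,\ldots,s_{q_k-1}$ to be strictly increasing; moreover $s_0\geq 0$, since the largest weight cannot exceed $n$. Hence $s_0<s_1<\cdots<s_{q_k-1}$ is a strictly increasing sequence of non-negative integers.

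The key step is then the elementary observation that the $i$-th term of such a sequence satisfies $s_i\geq i$ for every $i\in\{0,\ldots,q_k-1\}$, the extreme case being the sequence $0,1,2,\ldots$. Substituting into Definition~\ref{def: spread MWS} yields $\Delta(C)=\sum_{i=0}^{q_k-1}(s_i-i)\geq 0$, with equality precisely when $s_i=i$ for all $i$, i.e.\ when $S(C)=\{n,n-1,\ldots,n-q_k+1\}$, which is exactly the condition that $C$ be strictly compact. Combining this with the reduction of the first paragraph gives $n\geq\frac{q}{2}q_k$, with equality if and only if $C$ is strictly compact, which is the claimed optimality. No step here is a genuine obstacle; the only points requiring care are the bookkeeping of signs and the justification that $s_0\geq 0$, both of which are immediate once the weights are ordered decreasingly.
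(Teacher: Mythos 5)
Your proof is correct and follows essentially the same route as the paper: both reduce optimality to the identity $n=\frac{q}{2}q_k+\frac{\Delta(C)}{q_{k-1}}$ of Theorem~\ref{theorem: length spread} together with the fact that $\Delta(C)=0$ exactly characterises strict compactness. The one difference is that the paper simply asserts that ``the length of an MWS code grows together with its spread,'' leaving the crucial inequality $\Delta(C)\geq 0$ implicit, whereas you prove it explicitly via the observation that the $s_i$ form a strictly increasing sequence of non-negative integers, hence $s_i\geq i$; this fills in the step that actually makes the optimality conclusion valid, so your write-up is the more complete version of the same argument.
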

\begin{proof}
It follows from Theorem \ref{theorem: parameters} and Theorem \ref{theorem: length spread}. The spread of an MWS code is indeed equal to zero if and only if the code is strictly compact, and the length of an MWS code grows together with its spread.
\end{proof}

%%%%%%%%%%%%%%%%%%%%%%%%%%%%%%%%%%%%%%%%%%%%%

\section{On the parameters of MWS codes}
In this section we consider again general MWS codes, and we use strictly compact MWS codes to obtain a characterization of their parameters.
\begin{cor}
Let $C$ be an MWS code with $q$ odd and odd dimension $k$. Then $\Delta(C)>0$.
In particular, there does not exist a strictly compact MWS code $\mathcal{SC}_{q,k}$ for $q$ and $k$ both odd.
\end{cor}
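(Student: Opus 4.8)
The plan is to argue by contradiction, exploiting the fact that a strictly compact MWS code would be forced to have a non-integer length when $q$ and $k$ are both odd. The only thing that really needs to be shown is that the case $\Delta(C)=0$ cannot occur, and that combined with the non-negativity of the spread yields the strict inequality.

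First I would record that $\Delta(C)\ge 0$ for every MWS code $C$, with equality precisely when $C$ is strictly compact; this is exactly the content already invoked in Corollary \ref{cor: strictly optimal}. Writing $S(C)=\{n-s_0,\ldots,n-s_{q_k-1}\}$ with the $s_i$ distinct, the $q_k$ nonzero weights are distinct integers in $\{1,\ldots,n\}$, so $\sum_i s_i = q_k\,n-\sum_{w\in S(C)}w$ is minimised exactly when $S(C)$ consists of the top $q_k$ values $n,n-1,\ldots,n-q_k+1$. In that case $\sum_i s_i=\binom{q_k}{2}=\sum_{i=0}^{q_k-1}i$, so $\Delta(C)=\sum_i(s_i-i)=0$, and for any other weight set $\Delta(C)>0$. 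Thus to prove $\Delta(C)>0$ it suffices to rule out the strictly compact case.

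Next I would suppose toward a contradiction that $\Delta(C)=0$, i.e. that $C$ is strictly compact. By Theorem \ref{theorem: parameters} its length is $n=\frac{q}{2}q_k$, which must be a positive integer. The crux is a short parity computation: since $q$ is odd, every term of $q_k=\sum_{i=0}^{k-1}q^i$ is odd, so $q_k\equiv k\pmod 2$. With $k$ odd this forces $q_k$ odd, whence $q\,q_k$ is a product of two odd numbers and is itself odd. Therefore $\frac{q}{2}q_k\notin\mathbb{Z}$, contradicting the integrality of $n$.

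Combining the two steps gives $\Delta(C)\neq 0$, and since $\Delta(C)\ge 0$ we conclude $\Delta(C)>0$. The final assertion is then immediate: a strictly compact code $\mathcal{SC}_{q,k}$ has spread $0$ by Definition \ref{def: spread MWS}, which we have just shown is impossible for $q$ and $k$ both odd. Equivalently, one can read the conclusion straight off Theorem \ref{theorem: length spread}, since $n=\frac{q}{2}q_k+\frac{\Delta(C)}{q_{k-1}}$ together with $n\in\mathbb{Z}$ and $\frac{q}{2}q_k\notin\mathbb{Z}$ forces $\Delta(C)\neq 0$. I do not expect any genuine obstacle here; the only subtle point is that one must invoke non-negativity of the spread to upgrade $\Delta(C)\neq 0$ to $\Delta(C)>0$, rather than concluding merely that it is nonzero.
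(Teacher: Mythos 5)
Your proposal is correct and follows essentially the same route as the paper: rule out $\Delta(C)=0$ by noting that a strictly compact code would have length $\frac{q}{2}q_k$ by Theorem \ref{theorem: parameters}, which cannot be an integer when $q$ and $k$ are both odd. Your write-up is just more explicit than the paper's on two points it leaves implicit --- the parity fact $q_k\equiv k\pmod 2$ for odd $q$, and the non-negativity of the spread needed to upgrade $\Delta(C)\neq 0$ to $\Delta(C)>0$.
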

\begin{proof}
If $\Delta(C)=0$ then $C$ is a strictly compact MWS code, hence by Theorem  \ref{theorem: parameters} we know that its length is 
$
n=\frac{q}{2}{q}_k\in\mathbb{Z}.$ 
We have two possible cases: either $q$ is even, or ${q}_k$ is. Observe that if $q$ is odd, the latter is true if and only if $k$ is even.
\end{proof}
\begin{cor}\label{cor: qk odd}
Let $C$ be an $[n,k,d]_q$ MWS code.
\begin{enumerate}
\item\label{cor qk case 1}  Let $q\cdot k$ be even. Then $\Delta(C)=h{q}_{k-1}$, with $h$ a positive integer.
\item\label{cor qk case 2} Let $q\cdot k$ be odd. Then $\Delta(C)=\frac{2h+1}{2}{q}_{k-1}$. 
\\
In particular, $n\ge \frac{q}{2}{q}_k+\frac{1}{2}$ and its spread is at least $\frac{1}{2}{q}_{k-1}$.
\end{enumerate}
\end{cor}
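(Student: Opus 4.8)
The plan is to read both statements off Theorem~\ref{theorem: length spread} by a parity argument. That theorem gives $n=\frac{q}{2}{q}_k+\frac{\Delta(C)}{{q}_{k-1}}$, equivalently $\frac{\Delta(C)}{{q}_{k-1}}=n-\frac{q}{2}{q}_k$, so that $\Delta(C)={q}_{k-1}\bigl(n-\frac{q}{2}{q}_k\bigr)$. Since $n$ is an integer, the entire arithmetic behaviour of $\Delta(C)/{q}_{k-1}$ is controlled by whether $\frac{q}{2}{q}_k$ is an integer or a half-integer, which is a pure parity question.

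First I would record the key parity fact: when $q$ is odd, ${q}_k=1+q+\cdots+q^{k-1}$ is a sum of $k$ odd terms, so ${q}_k\equiv k\pmod 2$. For case~\ref{cor qk case 1} ($q\cdot k$ even) there are two subcases. Either $q$ is even, so $\frac{q}{2}{q}_k\in\mathbb{Z}$ immediately; or $q$ is odd and $k$ is even, in which case the parity fact makes ${q}_k$ even and again $\frac{q}{2}{q}_k=\frac{q\,{q}_k}{2}\in\mathbb{Z}$. In both subcases $n-\frac{q}{2}{q}_k$ is an integer, so $\Delta(C)$ is an integer multiple $h{q}_{k-1}$; non-negativity of the spread then forces $h\ge 0$ (and $h\ge 1$ precisely when $C$ is not strictly compact).

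For case~\ref{cor qk case 2} ($q\cdot k$ odd) both $q$ and $k$ are odd, so by the parity fact ${q}_k$ is odd and $\frac{q}{2}{q}_k$ is a half-integer with odd numerator. Subtracting it from the integer $n$ gives $\frac{\Delta(C)}{{q}_{k-1}}=\frac{2h+1}{2}$ for some integer $h$, which is exactly the claimed shape; note this is consistent with $\Delta(C)$ being a genuine integer, since here $k-1$ is even and $q$ odd make ${q}_{k-1}$ even. For the ``in particular'' part I would combine this with $\Delta(C)\ge 0$: among integers $h$ with $(2h+1)/2\ge 0$ the minimum value is $1/2$, attained at $h=0$, so $\Delta(C)\ge\frac{1}{2}{q}_{k-1}$, and feeding this back into Theorem~\ref{theorem: length spread} yields $n\ge\frac{q}{2}{q}_k+\frac{1}{2}$.

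There is no deep obstacle here: the corollary is essentially parity bookkeeping layered on top of Theorem~\ref{theorem: length spread}. The one point needing care, and the only input not already made explicit in the excerpt, is the non-negativity $\Delta(C)\ge 0$, which underpins the sign of $h$ in case~\ref{cor qk case 1} and the lower bounds in case~\ref{cor qk case 2}. I would justify it by ordering the ${q}_k$ distinct weights so that $n-s_0>n-s_1>\cdots>n-s_{{q}_k-1}$; then $s_0<s_1<\cdots<s_{{q}_k-1}$ are distinct non-negative integers, hence $s_i\ge i$, making every summand $s_i-i$ in Definition~\ref{def: spread MWS} non-negative.
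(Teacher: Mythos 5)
Your proposal is correct and takes essentially the same route as the paper: both read the two cases off Theorem~\ref{theorem: length spread} by checking whether $\frac{q}{2}{q}_k$ is an integer or a half-integer, using the fact that for odd $q$ the parity of ${q}_k$ equals that of $k$. You are in fact slightly more careful than the paper, which invokes $\Delta(C)\ge 0$ implicitly to fix the sign of $h$, whereas you justify it explicitly by ordering the weights so that $s_0<s_1<\cdots<s_{{q}_k-1}$, whence $s_i\ge i$.
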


\begin{proof}
The length of $C$ is equal to $\frac{q}{2}{q}_k+\frac{\Delta(C)}{{q}_{k-1}}\in\mathbb{Z}$.
\begin{enumerate}
\item If $q\cdot k$ is even, then either $q$ is even or ${q}_k$ is. This implies that $\frac{q}{2}{q}_k\in\mathbb{Z}$, and since $n\in\mathbb{Z}$, so has to be $n-\frac{q}{2}{q}_k=\frac{\Delta(C)}{{q}_{k-1}}$. 
\item If both $q$ and $k$ are odd, then $\frac{q}{2}{q}_k$ is not an integer. However, since $n\in\mathbb{Z}$,  $\frac{\Delta(C)}{{q}_{k-1}}$ is equal to $\frac{2h+1}{2}$ for a non-negative integer $h$. This implies that the spread of $C$ is $\Delta(C)=\frac{2h+1}{2}{q}_{k-1}$.
\end{enumerate}
\end{proof}

\begin{exmp}
Consider an MWS code $C$ of dimension $k=3$ over $\mathbb{F}_3$, so that by Corollary \ref{cor: qk odd}, $C$ has spread $\Delta(C)=\frac{2h+1}{2}{q}_{k-1}=2(2h+1)$. In particular $C$ cannot be strictly compact, since its spread is at least $2$.\\
We suppose now that $\Delta(C)=2$, hence by Theorem \ref{theorem: length spread} its length is $\frac{q}{2}{q}_k+\frac{2}{{q}_{k-1}}=\frac{3}{2}\frac{3^3-1}{3-1}+2\frac{3-1}{3^2-1}=\frac{39}{2}+\frac{1}{2}=20$. The hypothetical set of weights of a strictly compact MWS code with same length would have been
$$
\{8,9,10,11,12,13,14,15,16,17,18,19,20\} .
$$
In our case we instead know that the spread $\Delta(C)=2$, hence we have two possibilities, either
$$
\{6,9,10,11,12,13,14,15,16,17,18,19,20\} \text{ or }
\{7,8,10,11,12,13,14,15,16,17,18,19,20\} 
.$$

We already said that in the general case $\Delta(C)=2(2h+1)$, we assume now that $h=1$ so that the spread is 6. In this case we proceed as above and apply Theorem  \ref{theorem: length spread} to deduce that the length of $C$ is 
$$
n=\frac{q}{2}\frac{q^k-1}{q-1}+\frac{q-1}{q^{k-1}-1}6=\frac{3}{2}\frac{3^3-1}{2}+\frac{2}{3^{2}-1}\cdot 6=\frac{3}{2}\frac{26}{2}+\frac{3}{2}=\frac{39}{2}+\frac{3}{2}=21.
$$
The only possible sets of weights of such an MWS code are

\begin{align*}
\{& 3,10,11,12,13,14,15,16,17,18,19,20,21\}\\
\{& 4,9,11,12,13,14,15,16,17,18,19,20,21\}\\
\{& 5,8,11,12,13,14,15,16,17,18,19,20,21\}\\
\{& 6,7,11,12,13,14,15,16,17,18,19,20,21\}\\
\{& 5,9,10,12,13,14,15,16,17,18,19,20,21\}\\
\{& 6,8,10,12,13,14,15,16,17,18,19,20,21\}\\
\{& 7,8,9,12,13,14,15,16,17,18,19,20,21\}.\\
\end{align*}

\end{exmp}

\begin{prop}\label{distMWS-prop1}
Let $C$ be an $[n,k,d]_q$ MWS code.
\begin{enumerate}
\item If either $q$ or $k$ are even, then 
$$
\left\{
\begin{array}{l}
n=\frac{q}{2}{q}_k+h\\
d\ge \left(\frac{q}{2}-1\right){q}_k+1+h\left(1-{q}_{k-1}\right)\\
d\leq\left(\frac{q}{2}-1\right){q}_k+1+h-\left\lceil h \frac{{q}_{k-1}}{{q}_k}\right\rceil 
\end{array}
\right.
$$
where $h$ is a non-negative integer.
\item If both $q$ and $k$ are odd, then 
$$
\left\{
\begin{array}{l}
n=\frac{q}{2}{q}_k+h+\frac{1}{2}\\
d\ge \left(\frac{q}{2}-1\right){q}_k+\frac{3}{2}+h\left(1-{q}_{k-1}\right)\\
d\leq\left(\frac{q}{2}-1\right){q}_k+\frac{3}{2}+h-\left\lceil h \frac{{q}_{k-1}}{{q}_k}\right\rceil 
\end{array}
\right.
$$
where $h$ is a non-negative integer.
\end{enumerate}
\end{prop}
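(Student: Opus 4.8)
The plan is to reduce everything to a single combinatorial quantity, the largest ``excess'' appearing in the weight set, and to control it using the spread $\Delta(C)$ supplied by Theorem~\ref{theorem: length spread} and Corollary~\ref{cor: qk odd}. The length formulas are immediate: substituting $\Delta(C)=h\,{q}_{k-1}$ (when $q\cdot k$ is even) or $\Delta(C)=\frac{2h+1}{2}{q}_{k-1}$ (when $q\cdot k$ is odd) into the identity $n=\frac{q}{2}{q}_k+\frac{\Delta(C)}{{q}_{k-1}}$ of Theorem~\ref{theorem: length spread} yields $n=\frac{q}{2}{q}_k+h$ in the first case and $n=\frac{q}{2}{q}_k+h+\frac{1}{2}$ in the second.

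For the minimum distance I would first fix notation on the weight set. Ordering the nonzero weights decreasingly, write $S(C)=\{n-s_0,\ldots,n-s_{{q}_k-1}\}$ as in \eqref{eq: def sc}, with $0\le s_0<s_1<\cdots<s_{{q}_k-1}\le n-1$; then the minimum distance is exactly $d=n-s_{{q}_k-1}$. Setting $t_i=s_i-i$, the strict monotonicity of the $s_i$ forces $0\le t_0\le t_1\le\cdots\le t_{{q}_k-1}$, and by Definition~\ref{def: spread MWS} together with \eqref{eq: def delta} we have $\sum_{i=0}^{{q}_k-1}t_i=\Delta(C)$. Consequently $d=n-{q}_k+1-t_{{q}_k-1}$, so the whole problem collapses to bounding the top term $t_{{q}_k-1}$ of a non-decreasing sequence of non-negative integers whose sum is $\Delta(C)$.

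The two bounds then arise from the two extreme ways of distributing this fixed sum. Since every term is non-negative, $t_{{q}_k-1}\le\sum_i t_i=\Delta(C)$, with equality when the spread is concentrated in the smallest weight; this gives $d\ge n-{q}_k+1-\Delta(C)$. Conversely, the maximum of ${q}_k$ non-negative numbers summing to $\Delta(C)$ is at least their average, and $t_{{q}_k-1}$ is an integer, so $t_{{q}_k-1}\ge\lceil \Delta(C)/{q}_k\rceil$, giving $d\le n-{q}_k+1-\lceil \Delta(C)/{q}_k\rceil$. Substituting the case-dependent values of $n$ and $\Delta(C)$ into these two inequalities produces bounds of exactly the stated shape in each item. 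I would emphasise that both inequalities constrain \emph{every} admissible weight set of such a code, so they follow from the monotone-sequence argument alone and do not require exhibiting codes attaining the extremes.

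The step I expect to require the most care is the substitution in the second case, where $\Delta(C)=\frac{2h+1}{2}{q}_{k-1}$ is a priori a half-integer. Here one must use that ${q}_{k-1}$ is even whenever $q$ and $k$ are both odd (indeed ${q}_{k-1}\equiv k-1\pmod 2$ for odd $q$), so that $\Delta(C)$ is genuinely an integer, matching $\sum_i t_i\in\mathbb{Z}$. The delicate point is then to keep the half-integer contributions of $n$ and of $\Delta(C)$ aligned, and to simplify the two ceiling expressions consistently, so that the resulting constants come out as the claimed integers and half-integers; this bookkeeping is exactly where the argument is easiest to get wrong, and I would verify it against the worked example with $q=k=3$ before finalising.
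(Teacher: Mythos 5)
Your reduction is the right one, and it is essentially the argument the paper compresses into a single sentence: writing $S(C)=\{n-s_0,\ldots,n-s_{q_k-1}\}$ with $s_0<s_1<\cdots<s_{q_k-1}$, setting $t_i=s_i-i$, noting $0\le t_0\le\cdots\le t_{q_k-1}$ and $\sum_i t_i=\Delta(C)$, and combining $d=n-q_k+1-t_{q_k-1}$ with $\lceil\Delta(C)/q_k\rceil\le t_{q_k-1}\le\Delta(C)$ is correct. In item 1 the substitution $n=\frac{q}{2}q_k+h$, $\Delta(C)=h\,q_{k-1}$ does reproduce the stated bounds exactly, and your parity observation ($q_{k-1}\equiv k-1\pmod 2$ for odd $q$, so $\Delta(C)$ is a genuine integer in case 2) is also correct.

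The gap is precisely in the step you deferred: in item 2 the substitution does \emph{not} produce ``bounds of exactly the stated shape''. With $n=\frac{q}{2}q_k+h+\frac{1}{2}$ and $\Delta(C)=\frac{2h+1}{2}q_{k-1}$, your two inequalities give
$$
d\ \ge\ \left(\frac{q}{2}-1\right)q_k+\frac{3}{2}+h\left(1-q_{k-1}\right)-\frac{q_{k-1}}{2},
\qquad
d\ \le\ \left(\frac{q}{2}-1\right)q_k+\frac{3}{2}+h-\left\lceil \frac{(2h+1)\,q_{k-1}}{2\,q_k}\right\rceil ,
$$
i.e.\ the lower bound carries an extra term $-\frac{q_{k-1}}{2}$ and the ceiling argument is $\frac{(2h+1)q_{k-1}}{2q_k}$, not $h\frac{q_{k-1}}{q_k}$. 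This is not bookkeeping that can be made to come out as printed: the best lower bound obtainable from the spread alone is $n-q_k+1-\Delta(C)$ (attained by concentrating all of $\Delta(C)$ in $t_{q_k-1}$), which is strictly smaller than the proposition's item 2 lower bound. The verification against $q=k=3$ that you postponed in fact exposes the discrepancy: for $h=0$ one has $n=20$, and the printed item 2 bounds force $d=8$, i.e.\ the weight set $\{8,9,\ldots,20\}$ of spread $0$, which is impossible when $q\cdot k$ is odd; the paper's own example lists the admissible weight sets $\{6,9,10,\ldots,20\}$ and $\{7,8,10,\ldots,20\}$, with $d=6$ and $d=7$, consistent with \emph{your} bounds. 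So your method is sound, and what it proves in case 2 is the corrected statement (your upper bound even implies the printed one, since $\lceil(2h+1)q_{k-1}/(2q_k)\rceil\ge\lceil h\,q_{k-1}/q_k\rceil$); but the proposal as written is incomplete because it asserts, rather than performs, the case 2 substitution, and performing it shows that item 2 as printed cannot be derived this way — its lower bound appears to be an error in the paper rather than something your argument can recover.
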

\begin{proof}
It follows from Corollary \ref{cor: qk odd}, noticing that the minimum distance is linked to the spread of the code, which in turn depends on $h$. 
\end{proof}

\section{Compact MWS codes}
In this section we focus on the parameters of compact MWS codes, which are MWS codes whose set of weights is of the form $\{d,d+1,\ldots,d+{q}_k-1\}$.\\
Let us start with considering a code with either  $q$ odd and $k$ even or with $q$ even, i.e. as in Corollary \ref{cor: qk odd}, case \ref{cor qk case 1}, so that the spread is $h{q}_{k-1}$ and the length is $n=\frac{q}{2}{q}_k+h$.
\\
If we suppose that $A_n=0$, then the spread has to be at least ${q}_k$, and this implies $h\ge q$. As a consequence $n\ge \frac{q}{2}{q}_k+q$.
\\
Suppose now the maximum weight in $C$ is $n-j$, namely $A_{n-j}\neq 0$ and $A_i=0\;$ for any $i>n-j$. Similarly to above, $h{q}_{k-1}\ge j {q}_k$, implying $\frac{h}{j}\ge q$. In this case we have obtained that $n\ge \frac{q}{2}{q}_k+jq$. We obtain the following complete characterization of the parameters of compact MWS codes.

\begin{cor}\label{cor: parameters compact}
Consider an $[n,k,d]_q$ compact MWS code $C$. Then 
\begin{enumerate}
\item if $q\cdot k$ is even, then the parameters of $C$ are
$$
\left[\frac{q}{2}{q}_k+jq,k,\left(\frac{q}{2}-1\right){q}_k+j(q-1)+1\right]_q\;,
$$
for a non-negative integer $j$. The set of weights of the code is
$$
S(C)=\left\{\left(\frac{q}{2}-1\right){q}_k+j(q-1)+1\;\quad,\; \ldots\;,\quad \left(\frac{q}{2}+1\right){q}_k+j(q-1)\right\}\;.
$$
\item if $q\cdot k$ is odd, then the parameters of $C$ are
$$
\left[\frac{q}{2}{q}_k+jq+\frac{1}{2},k,\left(\frac{q}{2}-1\right){q}_k+j(q-1)+\frac{3}{2}\right]\;,
$$
for a non-negative integer $j$. The set of weights of the code is
$$
S(C)=\left\{\left(\frac{q}{2}-1\right){q}_k+j(q-1)+\frac{3}{2}\;\quad,\; \ldots\;,\quad \left(\frac{q}{2}+1\right){q}_k+j(q-1)+\frac{1}{2}\right\}\;.
$$
\end{enumerate}
\end{cor}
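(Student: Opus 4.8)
The plan is to use the fact that, by Definition \ref{def: compact}, a compact MWS code is determined by a single parameter once $q$ and $k$ are fixed, and to extract that parameter from the spread. First I would record the weight distribution explicitly: $S(C)=\{d,d+1,\dots,d+q_k-1\}$ consists of $q_k$ consecutive integers, each occurring with multiplicity $q-1$ (the one weight per line forced by the MWS property). Writing $g=n-(d+q_k-1)$ for the gap between the length and the largest weight and listing the weights in decreasing order, Definition \ref{def: spread MWS} gives $s_i=g+i$, so the spread collapses to the single identity $\Delta(C)=\sum_{i=0}^{q_k-1}(s_i-i)=g\,q_k$. Substituting this into Theorem \ref{theorem: length spread} already expresses $n$ as a function of $g$, and then $d=n-g-q_k+1$ follows.

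Next I would determine which gaps $g$ can actually occur, which is exactly where Corollary \ref{cor: qk odd} is used. The value $\Delta(C)=g\,q_k$ must be one of the admissible spreads, namely $h\,q_{k-1}$ when $q\cdot k$ is even and $\frac{2h+1}{2}\,q_{k-1}$ when $q\cdot k$ is odd. Since $q_k=q\,q_{k-1}+1$, the integers $q_k$ and $q_{k-1}$ are coprime, so these conditions force $g$ to run through a single arithmetic progression; reindexing it by a non-negative integer $j$ produces the parameter in the statement. Feeding the resulting $g$ back into the formulas of the first paragraph gives the length and, after simplifying with the identities $q_k-1=q\,q_{k-1}$ and $q_k-q_{k-1}=q^{k-1}$, the minimum distance; the weight set is then immediate as $\{d,\dots,d+q_k-1\}$. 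The split into the two items of the corollary mirrors the two cases of Corollary \ref{cor: qk odd}, and the half-integer offsets in the odd--odd case are inherited directly from the factor $\frac{2h+1}{2}$.

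The hard part will be the integrality bookkeeping rather than any conceptual difficulty: I must upgrade the inequalities $h\,q_{k-1}\ge j\,q_k$ from the discussion preceding the statement into the exact parametrisation needed here, and track the $\frac{1}{2}$-shifts carefully so that both $n$ and $d$ land on the claimed closed forms. As a final consistency check I would confirm that $j=0$ recovers the strictly compact parameters of Theorem \ref{theorem: parameters}, and that the two endpoints of the displayed set $S(C)$ are separated by $q_k-1$, as Definition \ref{def: compact} requires.
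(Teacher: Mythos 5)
Your strategy is sound, and it is essentially the computation the paper itself intends (exact spread of a compact code, then Theorem \ref{theorem: length spread} plus the integrality constraints of Corollary \ref{cor: qk odd}). Precisely because it is sound, it cannot end where you say it will. Carry out the step you defer as ``integrality bookkeeping'', say in the case $q\cdot k$ even: compactness gives $\Delta(C)=g\,q_k$ exactly, Corollary \ref{cor: qk odd} gives $\Delta(C)=h\,q_{k-1}$, and coprimality of $q_k$ and $q_{k-1}$ forces $g=j\,q_{k-1}$ for a non-negative integer $j$. Substituting into Theorem \ref{theorem: length spread} then yields
$$
n=\frac{q}{2}q_k+\frac{g\,q_k}{q_{k-1}}=\Bigl(\frac{q}{2}+j\Bigr)q_k,
\qquad
d=n-g-q_k+1=\Bigl(\frac{q}{2}-1\Bigr)q_k+j\,q^{k-1}+1 .
$$
These are \emph{not} the closed forms in the statement, which claims $n=\frac{q}{2}q_k+jq$ and $d=\bigl(\frac{q}{2}-1\bigr)q_k+j(q-1)+1$; since $q_k>q$ and $q^{k-1}>q-1$ for $k\ge 2$, the two parametrisations disagree for every $j>0$. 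So the concrete gap in your proposal is the unverified assertion that the substitution ``lands on the claimed closed forms''—it does not, and no bookkeeping will make it do so, because the statement itself is false for $j>0$.

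The error originates in the paper's own proof, which commits exactly the slip your more careful route avoids: from $h\,q_{k-1}\ge g\,q_k$ (with $g$ the gap to the maximum weight) it passes to the weakened bound $n\ge\frac{q}{2}q_k+gq$ and then declares that weakened bound to be attained with equality by compact codes; but compactness gives equality in the \emph{original} inequality, $h\,q_{k-1}=g\,q_k$, i.e.\ $h=gq+g/q_{k-1}$, which strictly exceeds $gq$ whenever $g>0$. A counterexample makes this vivid: for $q=2$, $k=2$, $j=1$ the statement predicts a compact MWS code $[5,2,2]_2$ with $S(C)=\{2,3,4\}$; but by Lemma \ref{lem: link n weights} the three nonzero weights of a binary two-dimensional code sum to twice its (effective) length, and $2+3+4=9$ is odd, so no such code exists at any length. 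By contrast the $[6,2,3]_2$ code generated by $(1,1,1,1,1,0)$ and $(0,0,1,1,1,1)$ has weights $\{3,4,5\}$ and realises the corrected formulas above with $j=1$. The correct conclusion of your argument is therefore the parametrisation $\bigl[(\frac{q}{2}+j)q_k,\,k,\,(\frac{q}{2}-1)q_k+j\,q^{k-1}+1\bigr]_q$ in the even case (with the analogous redo in the odd case, where $g$ must run over odd multiples of $q_{k-1}/2$), together with the observation that the corollary as printed is erroneous; finish the computation and say so, rather than assuming agreement with the stated formulas.
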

\begin{proof}
We start from the case $q\cdot k$ that is even, which was already introduced in the discussion above. We consider a code with maximum weight $n-j$, and we assume it is compact. Then the length is  $n= \frac{q}{2}{q}_k+jq$ and therefore the distance is
$$
d=\frac{q}{2}{q}_k+jq-j-{q}_k+1=\left(\frac{q}{2}-1\right){q}_k+j(q-1)+1.
$$
The case $q\cdot k$ that is odd, namely as in Corollary \ref{cor: qk odd}, case \ref{cor qk case 2}, is very similar, since if the maximum weight is $n-j$, the spread is $h{q}_{k-1}+\frac{1}{2}\ge j{q}_k$, then $h\ge jq-\frac{1}{2}$ and it gives $h\ge jq$. As a consequence, the length is $n\ge \frac{q}{2}{q}_k+jq+\frac{1}{2}$, and  this bound is attained with equality if the code is compact.
 The minimum distance is therefore
$$
d=\frac{q}{2}{q}_k+jq+\frac{1}{2}-j-{q}_k+1=\left(\frac{q}{2}-1\right){q}_k+j(q-1)+\frac{3}{2}.
$$

\end{proof}
As a consequence of Corollary \ref{cor: parameters compact} we have the following result.
\begin{cor}
There does not exist compact non-binary MWS codes with distance $1$.
\end{cor}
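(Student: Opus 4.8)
The plan is to read off the minimum distance of a compact MWS code directly from Corollary \ref{cor: parameters compact} and to show that, once the non-binary hypothesis $q \geq 3$ is imposed, this distance is forced to exceed $1$. The parity of $q\cdot k$ only selects which of the two distance formulas in that corollary applies, so I would handle the two cases in parallel, reducing everything to a single positivity observation.

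First I would record the consequences of "non-binary": $q \geq 3$ gives $\frac{q}{2} - 1 \geq \frac{1}{2} > 0$, while ${q}_k = \frac{q^k - 1}{q-1} \geq 1$. Hence the leading term $\left(\frac{q}{2}-1\right){q}_k$, which appears in both distance expressions of Corollary \ref{cor: parameters compact}, is strictly positive.

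Next, in the case $q\cdot k$ even, Corollary \ref{cor: parameters compact} gives $d = \left(\frac{q}{2}-1\right){q}_k + j(q-1) + 1$ with $j$ a non-negative integer. Discarding the non-negative summand $j(q-1)$ and using $\left(\frac{q}{2}-1\right){q}_k > 0$ yields $d > 1$, and since $d$ is an integer this forces $d \geq 2$. In the case $q\cdot k$ odd the formula reads $d = \left(\frac{q}{2}-1\right){q}_k + j(q-1) + \frac{3}{2}$, and the identical positivity argument gives $d > \frac{3}{2} > 1$, so again $d \geq 2$. In either case $d \neq 1$, which is the assertion.

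I expect no genuine obstacle: the result is an immediate corollary of the explicit parameters already computed, the only substantive ingredient being the elementary fact that $\frac{q}{2} - 1$ turns strictly positive exactly when $q$ leaves the binary regime. It is instructive to contrast this with $q = 2$, where $\frac{q}{2} - 1 = 0$ and the even-case formula collapses to $d = j + 1$, permitting $d = 1$ at $j = 0$ (the full weight spectrum codes such as $B_k$); this shows that the non-binary hypothesis is essential rather than merely convenient.
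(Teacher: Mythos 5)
Your proof is correct and takes essentially the same route as the paper: the paper states this corollary as an immediate consequence of Corollary \ref{cor: parameters compact} with no written argument, and your positivity check of the two distance formulas (using $q\geq 3 \Rightarrow \left(\frac{q}{2}-1\right){q}_k>0$ and $j(q-1)\geq 0$) is exactly the verification the paper leaves implicit. Your closing remark contrasting with $q=2$, where the even-case formula collapses to $d=j+1$, is a nice additional sanity check showing the non-binary hypothesis is necessary.
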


%\newpage

\section{Known codes}

%subsections with known codes and families of codes, with (if feasible) characterisation according to spread and bounds on minimum distances etc.
MWS codes have been studied in  \cite{alderson-neri2019maximum,meneghetti2018linear}, where the authors also presented some bounds on the length of $[n,k]_q$ MWS codes. In this section we investigate bounds on the minimum distance and the spread of known MWS $[n,k,d]_q$ codes. Let us first give some definitions and a simple observation. We only consider \textit{non-degenerate} $[n,k]_q$ codes.  

\begin{defn}
Let $m_1,\ldots,m_k$ be elements in $\mathbb{F}_q$ not all equal to zero. The set $\mathcal{H}$ consisting of all vectors $X=(x_1,\ldots, x_k)$ such that $$m_1x_1+\cdots+ m_kx_k=c ~~\mbox{,\quad for } c\in \mathbb{F}_q\;,$$ 
is called a \textit{hyperplane}, which is a $(k-1)$-dimensional subspace of $(\mathbb{F}_q)^k$. 
\end{defn}
The number of the $1$-dimensional vector spaces in $(\mathbb{F}_{q})^k$ is equal to $\frac{q^k-1}{q-1}=q_k$ which coincides with the number of $(k-1)$-dimensional subspaces (hyperplanes) of $(\mathbb{F}_{q})^k$. Consequently, every hyperplane contains $\frac{q^{k-1}-1}{q-1}=q_{k-1}$ $k$-vectors over $\mathbb{F}_q$. Any pair of distinct hyperplanes in $(\mathbb{F}_{q})^k$ intersects in a $(k-2)$-dimensional subspace over $\mathbb{F}_q$. 
\begin{defn}\label{defn:char}
Let $C$ be an $[n,k]_q$ code with generator matrix $G$ where $M$ is the (multi)set of columns of $G$ and $A$ is a subspace in $(\mathbb{F}_q)^k$. Then $\mbox{Char}_G(A)$ is the number, including multiplicity, of $k$-vectors in the (multi)set $M\cap A $. We denote by $m(v)$ the multiplicity of the vector $v$ in $M$.
\end{defn}

\begin{rem}\label{remark-weght,char}
 Let $G$ be a generating matrix of an $[n,k]_q$ code $C$. For any non-zero vector $m=(m_1,\ldots,m_k)\in (\mathbb{F}_q)^k$, the hyperplane $m_1x_1+\cdots+m_kx_k=0$ contains $n-s$ columns (with multiplicity) of $G$ if and only if the codeword $mG$ has weight $s$. So we have a hyperplane with $\mbox{Char}_G(H)=n-s$ if and only if there is a codeword $c\in C$ with weight $s$. 
 
\end{rem}
\begin{theorem}
There exists an $[n,k,d]_q$ MWS code for each prime power $q$ and $k\geq 2$, where
$$n=2^{q_k}-1~~\mbox{ and }~~d=2^{q^{k-1}-1}-1.$$
\end{theorem}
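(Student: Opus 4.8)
The plan is to realise the code through its column multiset, exploiting the dictionary of Remark~\ref{remark-weght,char}: a non-degenerate $[n,k]_q$ code is specified by a multiplicity function $m$ on the $q_k$ one-dimensional subspaces (points) of $(\mathbb{F}_q)^k$, its length is $n=\sum_P m(P)$, and its nonzero weights are exactly the $q_k$ values $n-\mathrm{Char}_G(H)$, one for each hyperplane $H$, where $\mathrm{Char}_G(H)=\sum_{P\in H}m(P)$. Thus the code is MWS precisely when the $q_k$ characteristics are pairwise distinct (there being only $q_k$ of them, the maximum allowed by Theorem~\ref{theorem: max number distinct weights}), and its minimum distance equals $n-\max_H \mathrm{Char}_G(H)$. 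The whole problem therefore reduces to assigning non-negative multiplicities with total $2^{q_k}-1$ so that all hyperplane characteristics are distinct and the largest one equals $n-d=2^{q_k}-2^{q^{k-1}-1}$.

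First I would fix one hyperplane $H^*$ and split the points into the $q_{k-1}$ points of $H^*$ and the $q^{k-1}$ points of the complementary affine space $\bar H^*$. On $\bar H^*$ I put \emph{low} multiplicities: distinct powers $2^0,\dots,2^{q^{k-1}-2}$ on all but one point, and multiplicity $0$ on a distinguished point $R_0$; these sum to exactly $2^{q^{k-1}-1}-1=d$. On $H^*$ I put \emph{high} multiplicities, each a positive multiple of $2^{q^{k-1}-1}$, with total $2^{q_k}-2^{q^{k-1}-1}$; this is possible because that total equals $2^{q^{k-1}-1}\bigl(2^{q_{k-1}+1}-1\bigr)$ and $2^{q_{k-1}+1}-1\ge q_{k-1}$, so the available units of size $2^{q^{k-1}-1}$ can be spread over the $q_{k-1}$ points with at least one unit each. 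The length is then $(2^{q_k}-2^{q^{k-1}-1})+(2^{q^{k-1}-1}-1)=2^{q_k}-1$, and since the points of $H^*$ span the hyperplane and at least one complement point has positive multiplicity, the columns span $(\mathbb{F}_q)^k$, so the dimension is $k$.

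The heart of the verification is the MWS property. For $H\ne H^*$ split $\mathrm{Char}_G(H)$ into its high part $\sum_{P\in H\cap H^*}m(P)$ and its low part $\sum_{P\in H\cap\bar H^*}m(P)$. As every low multiplicity is below $2^{q^{k-1}-1}$ and every high multiplicity is a multiple of it, the low part is just $\mathrm{Char}_G(H)\bmod 2^{q^{k-1}-1}$, so it suffices that the low parts be pairwise distinct among the $H\ne H^*$. Each low part is a subset-sum of distinct powers of $2$, hence determines the set $(H\cap\bar H^*)\setminus\{R_0\}$; the key lemma is that $H\mapsto(H\cap\bar H^*)\setminus\{R_0\}$ is injective on $\{H\ne H^*\}$, which holds because distinct hyperplanes meet $\bar H^*$ in distinct affine hyperplanes of $AG(k-1,q)$, and two distinct affine hyperplanes differ in at least two points and so cannot agree after deleting the single point $R_0$.

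It remains to separate $\mathrm{Char}_G(H^*)$ and show it is maximal. It equals the full high total, and for $H\ne H^*$ one has $\mathrm{Char}_G(H^*)-\mathrm{Char}_G(H)=\sum_{P\in H^*\setminus H}m(P)-(\text{low part of }H)$. Since $H^*\setminus H$ has $q^{k-2}\ge 1$ points of high multiplicity at least $2^{q^{k-1}-1}$, while the low part is at most $d=2^{q^{k-1}-1}-1$, this difference is strictly positive; hence $\mathrm{Char}_G(H^*)$ is the unique maximum and differs from every other characteristic. Consequently all $q_k$ characteristics are distinct, the code is MWS, and its minimum distance is $n-\mathrm{Char}_G(H^*)=2^{q^{k-1}-1}-1=d$. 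I expect the main obstacle to be precisely this injectivity lemma together with the scale-separation bookkeeping: one must secure the MWS property (all $q_k$ characteristics distinct) simultaneously with pinning the largest characteristic to the exact value $n-d$, and the lone zero-multiplicity point $R_0$ is what makes the distinctness argument delicate, resolved by the ``differ in at least two points'' property of affine hyperplanes.
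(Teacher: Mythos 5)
Your construction is correct, and it is genuinely different from the paper's argument. The paper does not construct anything: its proof simply invokes the geometric construction of Alderson--Neri (their Theorem 3.4), which assigns distinct powers of $2$ as column multiplicities to the points of $PG(k-1,q)$, and then reads the minimum distance off the range of hyperplane characters established \emph{inside} that cited proof (the largest character there being $2^{q_k}-2^{q^{k-1}-1}$). Your proof instead builds the code from scratch with a two-scale assignment: powers $2^0,\dots,2^{q^{k-1}-2}$ on the affine points off a distinguished hyperplane $H^*$, one zero point $R_0$, and positive multiples of $2^{q^{k-1}-1}$ summing to $2^{q_k}-2^{q^{k-1}-1}$ on $H^*$. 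Both arguments ultimately rest on uniqueness of binary representations, but yours localizes it to the affine part and compensates with the modular (scale-separation) argument plus the injectivity of $H\mapsto (H\cap\bar H^*)\setminus\{R_0\}$; that lemma is sound, and in fact has a one-line proof you could add: the affine parts $H_1\cap\bar H^*$ and $H_2\cap\bar H^*$ are distinct sets of equal cardinality $q^{k-2}$, so their symmetric difference has even size $\geq 2$ and cannot be contained in $\{R_0\}$. What your route buys is self-containment and exact control: the maximum character is pinned to $n-d$ by construction (the zero-multiplicity point $R_0$ is exactly what lowers the top character from $2^{q_k}-2^{q^{k-1}}$ to $2^{q_k}-2^{q^{k-1}-1}$, matching the stated $d$), whereas the paper's one-line proof is shorter but only as reliable as the character-range claim imported from the external reference. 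All the bookkeeping identities you use check out, e.g. $2^{q_k}-2^{q^{k-1}-1}=2^{q^{k-1}-1}\bigl(2^{q_{k-1}+1}-1\bigr)$ since $q^{k-1}+q_{k-1}=q_k$, and $2^{q_{k-1}+1}-1\geq q_{k-1}$ guarantees the high multiplicities can be distributed with each point of $H^*$ receiving at least one unit.
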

\begin{proof}
The geometric construction given in \cite[Theorem 3.4]{alderson-neri2019maximum} leads to an $[n,k,d]_q$ code of length $n=2^{q_k}-1$. In the proof the characters of different hyperplanes are ranged from $2^{q_{k-1}}-1$ to $2^{q_k}-2^{q^{k-1}-1}$. So the minimum distance is $n-2^{q_k}-2^{q^{k-1}-1}=2^{q^{k-1}-1}-1$.
\end{proof}

\begin{lem}\label{known-lowbound}
If $C$ is an $[n,k,d]_q$ MWS code with $k\geq 2$, then 
$$n\geq \lceil\frac{q}{2}q_k\rceil~~,~~ d\geq \lceil \frac{q}{2}q_k-q_k+1\rceil~~\mbox{and}~~\Delta(C)\geq 0.$$
\end{lem}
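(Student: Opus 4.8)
The plan is to derive all three inequalities from the length--spread identity $n=\frac{q}{2}q_k+\frac{\Delta(C)}{q_{k-1}}$ of Theorem~\ref{theorem: length spread}, after first establishing the nonnegativity of the spread, which is really the heart of the statement. Writing $S(C)=\{n-s_0,\ldots,n-s_{q_k-1}\}$ as in~\eqref{eq: def sc} and relabelling so that $s_0<s_1<\cdots<s_{q_k-1}$, I would observe that these are $q_k$ \emph{distinct nonnegative} integers, since every weight of $C$ lies in $\{1,\ldots,n\}$. Consequently the $i$-th smallest satisfies $s_i\ge i$, and therefore $\Delta(C)=\sum_{i=0}^{q_k-1}(s_i-i)\ge 0$ term by term. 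This is the only place where the MWS hypothesis (exactly $q_k$ distinct weights, realised as a set of distinct characters) is used in an essential way.

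Given $\Delta(C)\ge 0$, the length bound is immediate: since $q_{k-1}>0$ for $k\ge 2$, Theorem~\ref{theorem: length spread} gives $n=\frac{q}{2}q_k+\frac{\Delta(C)}{q_{k-1}}\ge\frac{q}{2}q_k$, and as $n\in\mathbb{Z}$ this sharpens to $n\ge\lceil\frac{q}{2}q_k\rceil$. This is exactly the optimality already recorded for strictly compact codes (Corollary~\ref{cor: strictly optimal}), now read as a bound valid for every MWS code.

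For the minimum distance I would use that $d$ is the smallest weight, so $d=n-s_{q_k-1}$, turning the desired inequality into an upper bound on the largest character $s_{q_k-1}=n-d$. Here I see two handles. Termwise nonnegativity gives $s_{q_k-1}-(q_k-1)\le\sum_i(s_i-i)=\Delta(C)$, hence $s_{q_k-1}\le q_k-1+\Delta(C)$; equivalently, since $\sum_i s_i=n\,q_{k-1}$ and the remaining $q_k-1$ characters are distinct nonnegative integers summing to at least $\binom{q_k-1}{2}$, one has $s_{q_k-1}\le n\,q_{k-1}-\binom{q_k-1}{2}$. Substituting the length identity into the latter, the $n$-dependent part carries the factor $1-q_{k-1}$, which vanishes precisely for $k=2$ (where $q_{k-1}=q_1=1$); there the estimate reduces to $d\ge\binom{q_k-1}{2}=\frac{q}{2}q_k-q_k+1$, tight for $D_q=\mathcal{SC}_{q,2}$.

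The hard part will be the range $k\ge 3$. There $1-q_{k-1}<0$, so the elementary counting estimate degrades and no longer reaches the claimed threshold: the linear constraints alone (distinctness of the $s_i$, the fixed sum $n\,q_{k-1}$, and there being exactly $q_k$ of them) permit spreads --- for instance $q_k-1$ consecutive top weights together with a single isolated low weight --- that would drive $d$ below $\lceil\frac{q}{2}q_k-q_k+1\rceil$. Excluding such configurations cannot come from the spread identity by itself and must exploit the realizability of MWS weight distributions. I would therefore attack $k\ge 3$ through the hyperplane--character correspondence of Remark~\ref{remark-weght,char}: since $s_{q_k-1}=n-d=\mathrm{Char}_G(H)$ counts the columns of a generator matrix lying in a single hyperplane $H$, I would try to bound how many columns one hyperplane can absorb once all $q_k$ hyperplane characters are forced to be pairwise distinct, and use that to control $s_{q_k-1}$ and close the remaining gap.
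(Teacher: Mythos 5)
Your treatment of the first and third inequalities is correct, and your route is actually more self-contained than the paper's: you prove $\Delta(C)\ge 0$ directly (sorting the distinct nonnegative integers $s_i$ and using $s_i\ge i$ termwise) and then read the length bound off Theorem~\ref{theorem: length spread} plus integrality, whereas the paper imports the length bound from \cite[Lemma 5.1]{alderson-neri2019maximum} and only afterwards deduces $\Delta(C)\ge 0$. Your $k=2$ argument for the distance bound is also correct and tight for $D_q$. The genuine gap is exactly the one you flag: the distance bound for $k\ge 3$. You should know that the paper does not close it either: its proof appeals to Proposition~\ref{distMWS-prop1}, whose lower bound $d\ge(\frac{q}{2}-1)q_k+1+h(1-q_{k-1})$ (resp.\ with $+\frac{3}{2}$ in the odd--odd case) degrades as $h$ grows whenever $q_{k-1}>1$, so that citation actually covers only $h=0$ or $k=2$ --- precisely the cases you could handle by counting.

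More importantly, the attack you propose for $k\ge 3$ (bounding how many columns a single hyperplane can absorb once all $q_k$ characters are distinct) cannot succeed, because the inequality you are trying to prove is false there; the ``isolated low weight'' configurations you suspected are permitted by the linear constraints are in fact realizable by actual codes. Take $q=3$, $k=3$, so $q_k=13$ and the claimed bound reads $d\ge\lceil 7.5\rceil=8$. Let $G$ be the $3\times 32$ matrix over $\mathbb{F}_3$ whose columns are: $(0,0,1)$ taken $3$ times; $(0,1,1)$, $(1,0,1)$, $(1,2,1)$, $(2,2,1)$ once each; $(1,0,0)$ taken $4$ times; $(1,1,0)$ taken $8$ times; and $(1,2,0)$ taken $13$ times. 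The line $z=0$ has character $4+8+13=25$; the remaining twelve lines form the four affine parallel classes (vertical, horizontal, slope $1$, slope $2$), whose affine column-counts are $\{1,2,4\}$, $\{1,2,4\}$, $\{1,2,4\}$, $\{0,3,4\}$, each line gaining in addition the multiplicity $0$, $4$, $8$, $13$ of its point at infinity. The thirteen characters $\{1,2,4\}\cup\{5,6,8\}\cup\{9,10,12\}\cup\{13,16,17\}\cup\{25\}$ are pairwise distinct, so by Remark~\ref{remark-weght,char} this is an MWS code, with weight set $\{7,15,16,19,20,22,23,24,26,27,28,30,31\}$: a non-degenerate $[32,3,7]_3$ MWS code with $d=7<8$, even though $n=32\ge 20$ and $\Delta(C)=50\ge 0$ hold (indeed $32=\frac{3}{2}\cdot 13+\frac{50}{4}$, consistent with Theorem~\ref{theorem: length spread}). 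So no argument can close your gap: the correct conclusion of your analysis is that the middle inequality of the lemma should be restricted to $k=2$, or else weakened to the $h$-dependent bound of Proposition~\ref{distMWS-prop1}.
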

\begin{proof}
The lower bound for $n$ was proven in \cite[Lemma 5.1]{alderson-neri2019maximum} and the bound for minimum distance is already given in Proposition \ref{distMWS-prop1}. Finally the bound for $\Delta(C)$ is a direct consequence of Theorem \ref{theorem: length spread}. 
\end{proof}
\begin{prop}
For $k=2$ the bounds in Lemma \ref{known-lowbound} are tight for all prime powers $q$. 
\end{prop}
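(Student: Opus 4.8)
The plan is to exhibit one explicit code that simultaneously attains all three bounds of Lemma \ref{known-lowbound} in the case $k=2$. The obvious candidate is the code $D_q$ of Definition \ref{def: q 2 MWS}, which Proposition \ref{prop-example SCMWS} already identifies as a strictly compact MWS code with parameters $\left[\frac{q(q+1)}{2},2,\frac{q(q-1)}{2}\right]_q$. Since everything needed has been established earlier, the proof reduces to matching these closed-form parameters against the bounds specialized to $k=2$.

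First I would substitute $q_k=q_2=q+1$ into the three inequalities. The length bound becomes $n\geq\left\lceil\frac{q}{2}(q+1)\right\rceil$; because exactly one of $q,\,q+1$ is even, the quantity $\frac{q(q+1)}{2}$ is an integer for every prime power $q$, so the ceiling is vacuous and the bound reads $n\geq\frac{q(q+1)}{2}$. A short computation gives $\frac{q}{2}q_2-q_2+1=\frac{q(q-1)}{2}$, and since $q(q-1)$ is always even the distance bound likewise simplifies to $d\geq\frac{q(q-1)}{2}$ with no effect from the ceiling. The spread bound is just $\Delta(C)\geq 0$.

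It then remains to check that $D_q$ meets each of these with equality. Its length $\frac{q(q+1)}{2}$ and minimum distance $\frac{q(q-1)}{2}$ coincide with the two right-hand sides above by construction, so the bounds for $n$ and $d$ are attained. For the spread, I invoke the characterisation following Definition \ref{def: spread MWS}: a strictly compact MWS code is precisely an MWS code of spread zero, so $\Delta(D_q)=0$ attains the third bound as well. There is essentially no obstacle beyond the routine arithmetic of these three verifications; the only point demanding any care is confirming the integrality of the relevant expressions so that the ceiling functions may be dropped, which is immediate for $k=2$.
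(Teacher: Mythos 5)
Your proof is correct and takes essentially the same route as the paper: both exhibit $D_q$ (Definition \ref{def: q 2 MWS}) as the witness code and use Proposition \ref{prop-example SCMWS} (strict compactness, hence zero spread) to conclude that all three bounds of Lemma \ref{known-lowbound} are attained. The paper's proof is a bare citation (additionally pointing to an external result of Alderson--Neri), whereas you carry out the explicit substitution $q_2=q+1$ and the integrality check that removes the ceilings; the underlying argument is the same.
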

\begin{proof}
The proof follows from Definition \ref{def: q 2 MWS}, Proposition \ref{prop-example SCMWS} and \cite[Proposition 5.4]{alderson-neri2019maximum}. 
\end{proof}
\begin{rem}
Let $\beta\in\mathbb{F}_q$ and let $c=(c_1,\ldots,c_n)$ be a vector in $(\mathbb{F}_q)^n$. The number of coordinates of $c$ equal to $\beta$ is denoted by $c[\beta]$, namely
$
c[\beta]=\left|
\{
i\in\{1,\ldots,n\}\mid c_i=\beta
\}
\right|\;$.
In \cite{alderson-neri2019maximum} the authors considered codes with the following property:
\begin{equation}\label{eq: property}
\mbox{There exists } \beta\in\mathbb{F}_q,\, \beta\neq0, 
\mbox{ such that, for } a,b\in C,\; a[\beta] = b[\beta]\mbox{ only if } a = b.
\tag{A}
\end{equation}
Due to \cite[Corollary 5.2]{alderson-neri2019maximum}, if an $[n,k,d]_q$ MWS code $C$ satisfy property \eqref{eq: property}, then $n\geq \lceil \frac{q\cdot q_{k+1}}{2} \rceil$. This follows by applying the bound in the Lemma \ref{known-lowbound} to the $[2n+1,k+1,d']_q$ MWS code $\bar{C}$ arisen from the construction given in \cite[Proposition 4.1]{alderson-neri2019maximum}. Using the same strategy we can get $$\Delta(c)\geq \dfrac{(q-2)(q^k-q)(q^{k+1}-1)}{4(q-2)^2\cdot q}.$$
in this setting the $q_{k}$ smallest elements in $S(\bar{C})$ are exactly the elements in $S(C)$. So the minimum distance of the new $[2n+1,k+1,d']_q$ MWS code $\bar{C}$ coincides with the minimum distance of $[n,k,d]_q$ code $C$ which means $d'=d$.
%Using the construction given in the Proposition 4.1 (1) \cite{alderson-neri2019maximum}, if $C$ is a strictly compact MWS code satisfying property (A), then $\bar{C}$ is strictly compact if $n+1=q^k$.
\end{rem}
\begin{prop}
There exist an $[7,3]_2$ strictly compact MWS code $C$, and there exists an $[32,3]_3$ MWS code $C'$ which is not strictly compact. 
\end{prop}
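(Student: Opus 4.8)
The statement splits into an essentially trivial binary part and a genuine ternary construction. For the first code I would simply take $C=B_3$, the $[7,3,1]_2$ code of Definition \ref{def: binary MWS}. By Proposition \ref{prop-example SCMWS} its weight set is $S(B_3)=\{1,2,\ldots,7\}$, which consists of $q_k=7$ consecutive integers and contains $n=7$; hence $B_3$ is a strictly compact MWS code, and its parameters $[7,3,1]_2$ are exactly those predicted by Theorem \ref{theorem: parameters} for $q=2$, $k=3$. Nothing more is needed here.

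For the ternary part I would first observe that strict compactness is out of reach for free: since $q=k=3$ are both odd, Corollary \ref{cor: qk odd}(\ref{cor qk case 2}) forces $\Delta(C')=\frac{2h+1}{2}q_{k-1}>0$ for any MWS code of dimension $3$ over $\mathbb{F}_3$, so no such code is strictly compact. Thus the only real task is to exhibit one MWS $[32,3]_3$ code. Before constructing it I would record the numerology it must satisfy: with $q_3=13$ and $q_2=4$, Theorem \ref{theorem: length spread} gives $\Delta(C')=(32-\tfrac32\cdot13)\,q_2=50$, while Lemma \ref{lem: link n weights} forces the thirteen distinct nonzero weights, each occurring with multiplicity $q-1=2$, to sum to $9n=288$.

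To produce the code I would pass to geometry via Remark \ref{remark-weght,char}. A non-degenerate $[32,3]_3$ code is the same datum as a multiset $M$ of $32$ points of $\mathrm{PG}(2,3)$ (the columns of a generator matrix $G$, taken up to scaling), and the weight of the codeword in direction $m$ equals $32-\mathrm{Char}_G(H_m)$, where $H_m$ is the line $m\cdot x=0$. Since $\mathrm{PG}(2,3)$ has exactly $13$ points and $13$ lines, the code is MWS precisely when the thirteen line-characters are pairwise distinct. Writing $m_P\ge 0$ for the multiplicity of the point $P$, the whole construction reduces to choosing integers $m_P$ with $\sum_P m_P=32$, with support not contained in a single line (so that $G$ has rank $3$ and every nonzero weight is at least $1$), and such that the thirteen line-sums $\sum_{P\in\ell}m_P$ are pairwise distinct.

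The last condition is where the hard part lies, and I would settle it by genericity rather than by staring at a matrix. Two distinct lines of $\mathrm{PG}(2,3)$ meet in a single point, so the equality of their characters is the nontrivial linear relation $\sum_{P\in\ell\setminus\ell'}m_P=\sum_{P\in\ell'\setminus\ell}m_P$ between two disjoint triples of points; there are only $\binom{13}{2}=78$ such relations, and each carves out a proper affine subspace of the hyperplane $\sum_P m_P=32$. Since that hyperplane is $12$-dimensional, a union of $78$ lower-dimensional slices cannot swallow all of its non-negative integer points, so a valid multiplicity vector exists, and any one of them yields the desired code. The main obstacle is therefore purely this realizability step --- guaranteeing (or explicitly exhibiting, then checking by direct computation) one multiset of $32$ columns whose $13$ line-characters are distinct; everything else, namely that the code is MWS, that it cannot be strictly compact, and the forced values of $\Delta(C')$ and $\sum w$, is immediate from Remark \ref{remark-weght,char}, Corollary \ref{cor: qk odd} and Theorem \ref{theorem: length spread}.
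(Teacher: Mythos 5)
Your treatment of the binary half, of the impossibility of strict compactness for $q=k=3$ (via Corollary~\ref{cor: qk odd}), and the numerology $\Delta(C')=50$ are all correct and consistent with the paper (which establishes the $[7,3]_2$ part with the same code, read through hyperplane characters). The genuine gap sits exactly where you yourself locate ``the hard part'': the existence of a $[32,3]_3$ MWS code. Your genericity step is not a proof. The principle ``a union of $78$ proper affine slices cannot swallow all non-negative integer points of the $12$-dimensional simplex $\{m_P\ge 0,\ \sum_P m_P=32\}$'' is false as a general statement: the integer points of that simplex \emph{are} covered by the $33$ proper slices $m_{P_0}=j$, $j=0,1,\ldots,32$, for any fixed point $P_0$, so properness and dimension alone prove nothing; an argument specific to your $78$ hyperplanes is required. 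Worse, the obvious way to make your claim rigorous --- a union bound on integer points --- fails numerically in this instance: the simplex contains $\binom{44}{12}\approx 2.1\times 10^{10}$ integer points, while each slice $\sum_{P\in\ell\setminus\ell'}m_P=\sum_{P\in\ell'\setminus\ell}m_P$ contains $\sum_{s=0}^{16}\binom{s+2}{2}^2\binom{38-2s}{6}\approx 1.25\times 10^{9}$ of them, and $78\times 1.25\times 10^{9}\approx 9.7\times 10^{10}$ exceeds the total count several times over. Genericity works over $\mathbb{R}$, but it is destroyed here precisely because the points must be non-negative integers with the small sum $32$; nothing in your argument rules out that these particular $78$ hyperplanes conspire to cover everything.

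The paper does not attempt such an abstract existence argument: it invokes the explicit $[32,3]_3$ MWS code already constructed in \cite{alderson-neri2019maximum}, and then uses the hyperplane-character dictionary (your Remark~\ref{remark-weght,char}, which you set up correctly) to read off $S(C')$, $d=10$ and $\Delta(C')=50$ for that code. To repair your proof you should do the same, or else exhibit one concrete multiset of $32$ points of $\mathrm{PG}(2,3)$ and verify by direct computation that its $13$ line characters are pairwise distinct; the covering argument cannot be salvaged as stated.
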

\begin{proof}
The generator matrix of an $[7,3]_2$ is a matrix $G\in \mathbb{F}_2^{3\times 7}$ where the (multi)set of columns $M$ can be generated by $3$ linearly independent vectors $v_1,v_2,v_3$ in $\mathbb{F}_2^3$ where $m(v_i)=2^i$. $\mathbb{F}_2^3$ contains  $q_3=7$ hyperplanes ($2$-dimensional subspaces) with characters $\{0,1,\ldots,6\}$. Due to Remark  \ref{remark-weght,char}, the set of nonzero weights is $S(C)=\{1,2,\ldots,7\}$. It is easy to verify that $d=1$, $\Delta(C)=0$, $|S(C)|=q_k$ and $n\in S(C)$. So $C$ is a strictly compact MWS code. The existence of $[7,3]_2$ MWS code is also shown in \cite{shi2019many,alderson-neri2019maximum}.

% The existence of a $[7,3]_2$ MWS code $C$ has been proven in \cite{alderson-neri2019maximum}. In the proof the set of characters of hyperplanes is given as $\{0,1,2,\dots,6\}$. So the set of non-zero weights is $S(C)=\{1,2,,\dots,7\}$. This gives  $d=1$ and $q_k=7$, $\Delta(C)=0$ and $n\in S(C)$. 

The second part was also proven in \cite{alderson-neri2019maximum}. Using the set of characters of hyperplanes, we can determine $S(C')=\{10,14,16,19,20,21,22,24,21,20,26,27,28,30,31\}$, $d=10$ and $\Delta(C')=50$. All the parameters satisfy the bounds given in the Lemma \ref{known-lowbound}. Moreover, we already proved that there is no strictly compact MWS code when $q.k$ is odd. 
\end{proof}
%\begin{prop}\label{MWS_k=3}
%There exists an $[n,3,d]_q$ MWS code with $n\leq \frac{1}{2}q(q^3-1)$ and  $\Delta(C)\leq \frac{1}{2}(q^5-2q^3-3q^2-2q)$. Let $c\in C$. If $q$ is even $ \dfrac{q^3-q}{q}\leq w(c)\leq \frac{1}{2}(q^4-q-2)$ and if $q$ is odd $ \dfrac{q^4-q^3+q^2-q}{2}\leq w(c)\leq \frac{1}{2}(q^4-q-2)$.
%\end{prop}
%\begin{proof}

%\end{proof}

%The bounds presented in the Proposition \ref{MWS_k=3} gives tighter lower bound for the minimum distance of $[n,3,d]_q$ MWS codes in compare to the bound in Lemma \ref{known-lowbound}. On the other hand, the maximum possible weight is $\frac{1}{2}(q^4-q-2)=n-1$ and this code is not strictly compact.

\begin{prop}\label{MWS bound K>=2}
For each $k\geq 2$ there exists an MWS code $C$ of length, minimum distance and spread  
\begin{equation*}
    n=q_{k-1}\binom{q_k}{2} ~~,~~d=q^{k-2}\left[\binom{q_k}{2}-q_k+1\right]~~\mbox{and }~~ \Delta(C)=\dfrac{q_k\cdot q_{k-1}(q_k\cdot q_{k-1}+q_{k-1}-q)}{2}.
\end{equation*}
\end{prop}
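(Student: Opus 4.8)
The plan is to realise $C$ geometrically, through the column multiset of a generator matrix, in the language of Definition \ref{defn:char} and Remark \ref{remark-weght,char}, and then to read off each of the three parameters from the multiset of hyperplane characters. Concretely, I would index the $q_k$ projective points of $(\mathbb{F}_q)^k$ as $P_0,\dots,P_{q_k-1}$ along a Singer cycle, so that a Singer difference set $D$ of size $q_{k-1}$ governs the incidences via $P_i\in H_j\iff i-j\in D \pmod{q_k}$, and assign to $P_i$ the multiplicity $m(P_i)=i\,q_{k-1}$. With this choice $n=\sum_{i=0}^{q_k-1} i\,q_{k-1}=q_{k-1}\binom{q_k}{2}$ follows at once, giving the claimed length; each hyperplane still carries $q_{k-1}$ points and each point lies on $q_{k-1}$ hyperplanes, so the incidence count is the expected one.

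First I would prove that $C$ is MWS. By Remark \ref{remark-weght,char} the nonzero weights of $C$ are exactly the values $n-\mbox{Char}_G(H)$ as $H$ ranges over the $q_k$ hyperplanes, so it suffices to show that the $q_k$ characters $\mbox{Char}_G(H_j)=\sum_{d\in D}m(P_{j+d})=q_{k-1}\sum_{d\in D}\bigl((j+d)\bmod q_k\bigr)$ are pairwise distinct; this yields $q_k$ distinct nonzero weights, the maximum permitted by Theorem \ref{theorem: max number distinct weights}, hence $C$ is MWS. The minimum distance is then $d=n-\max_j \mbox{Char}_G(H_j)$, obtained by locating the hyperplane of largest character, which the difference-set structure lets me evaluate explicitly and match to the stated value. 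Finally, once $C$ is known to be an MWS code of the computed length, I would not recompute the spread combinatorially: it is forced by Theorem \ref{theorem: length spread}, which gives $\Delta(C)=q_{k-1}\bigl(n-\tfrac{q}{2}q_k\bigr)$, so substituting the value of $n$ produces $\Delta(C)$ directly.

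The hard part will be the distinctness of the characters, i.e. the injectivity of the map $j\mapsto \sum_{d\in D}\bigl((j+d)\bmod q_k\bigr)$. Writing $\sum_{d\in D}(j+d)=q_{k-1}\,j+\sigma$ with $\sigma=\sum_{d\in D}d$, the modular reduction subtracts $q_k$ once for each $d\in D$ with $j+d\ge q_k$, so $\mbox{Char}_G(H_j)=q_{k-1}\bigl(q_{k-1}\,j+\sigma-q_k\,t_j\bigr)$ where $t_j=\lvert\{d\in D: j+d\ge q_k\}\rvert$. Because of this wrap-around the character is not a linear function of $j$, and controlling the ``carry'' count $t_j$ across the $q_k$ shifts is precisely what must force the $q_k$ sums to be distinct, while simultaneously pinning down the extremal character needed for $d$. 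If the difference-set bookkeeping turns out to be delicate, an alternative is to prescribe the characters directly and solve the point--hyperplane incidence system of $(\mathbb{F}_q)^k$, which is invertible over $\mathbb{Q}$, then verify integrality and non-negativity of the resulting multiplicities $m(P_i)$; this trades the number-theoretic argument for a linear-algebra one and makes the extremal character, and hence $d$, transparent.
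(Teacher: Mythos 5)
Your overall framework is the paper's own: realise the code by a column multiset (Definition \ref{defn:char}), read the weights as $n-\mbox{Char}_G(H)$ via Remark \ref{remark-weght,char}, and get the spread for free from Theorem \ref{theorem: length spread}. Two pieces of your plan do work. The length is right: $\sum_i i\,q_{k-1}=q_{k-1}\binom{q_k}{2}$. And the distinctness of characters, which you flag as the hard part, is in fact easy: from $\mbox{Char}_G(H_j)=q_{k-1}\bigl(q_{k-1}j+\sigma-q_k t_j\bigr)$, equality for $j\neq j'$ would force $q_{k-1}(j-j')=q_k(t_j-t_{j'})$; since $q_k-q\,q_{k-1}=1$, the integers $q_k$ and $q_{k-1}$ are coprime, so $q_k$ would have to divide $j-j'$, impossible for $0\le j,j'\le q_k-1$. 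So your code is MWS of the claimed length, and hence (by Theorem \ref{theorem: length spread}) of the same spread as any MWS code of that length.

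The genuine gap is the minimum distance: your construction does not achieve the claimed $d$, and no difference-set bookkeeping can repair it, because the wrap-around term $t_j$ changes the multiset of characters, inflating the maximal one. Concretely, take $q=2$, $k=3$ (the Fano plane), $D=\{0,1,3\}\subset\mathbb{Z}_7$, so $m(P_i)=3i$ and $n=63$. The seven sums $\sum_{d\in D}\bigl((j+d)\bmod 7\bigr)$ are $4,7,10,13,9,12,8$, giving characters $\{12,21,24,27,30,36,39\}$; the maximum is $39$, so $d=63-39=24$, whereas the proposition claims $d=q^{k-2}\bigl[\binom{q_k}{2}-q_k+1\bigr]=30$. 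Translating $D$ permutes but does not change this multiset, and the other multiplier class $\{3,5,6\}$ is worse (maximal character $42$, $d=21$), so your sentence about matching the extremal character ``to the stated value'' cannot be made true. The paper avoids wrap-around entirely by a different multiplicity assignment: $m(v)=\sum_{i:\,v\in H_i}i$, the sum of the indices of the hyperplanes through $v$. Since any two distinct hyperplanes meet in a $(k-2)$-space with $q_{k-2}$ points, double counting gives $\mbox{Char}_G(H_s)=q_{k-2}\binom{q_k}{2}+q^{k-2}s$, an increasing arithmetic progression in $s$, so both distinctness and the maximal character (hence the claimed $d$) are immediate. Your fallback idea of prescribing the characters and inverting the point--hyperplane incidence system is viable and is essentially what this assignment accomplishes constructively, with integrality and non-negativity built in. One further caveat, affecting the paper's proof and yours equally: actually carrying out the substitution of $n$ into $\Delta(C)=q_{k-1}\bigl(n-\frac{q}{2}q_k\bigr)$ yields $\frac{q_k q_{k-1}(q_k q_{k-1}-q_{k-1}-q)}{2}$, not the $+q_{k-1}$ version printed in the statement (for $q=2,k=3$ the correct value is $168$, not $231$), so the stated spread formula appears to contain a sign typo.
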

\begin{proof}
Let $\{H_0,\ldots, H_{q_k-1}\}$ be the set of hyperplanes in $(\mathbb{F}_{q})^k$. Define the generating matrix $G$ as follows. For each vector $v\in \mathbb{F}_q^k$, let $\mbox{Char}_G(v)=\sum_{v\in H_i}i$. For  $k=2$, a hyperplane is just a single vector of length $k$ over $\mathbb{F}_q$, so two hyperplanes might coincide or disjoint. If $k=3$, then two distinct hyperplanes  have intersection in a $1$-dimensional subspace. So for $k\geq 3$, a pair of distinct hyperplanes have intersection in a $k-2$-subspace. As a result, for $k\geq 2$ and $0\leq s\leq \frac{q^k-1}{q-1}-1=q_k-1$ we have 

\begin{equation*}
    \mbox{Char}_G(H_s)=q_{k-2}\binom{q_k}{2}+(q_{k-1}-q_{k-2})\cdot s,
\end{equation*}
which tells the minimum distance is $d=n-\mbox{Char}_G(H_{q_k-1})=q^{k-2}\left[\binom{q_k}{2}-q_k+1\right]$ and the number of columns of $G$ should be $n=q_{k-1}\binom{q_k}{2}$. The rest follows by applying  Theorem \ref{theorem: length spread}.
\end{proof}

The above length was given in \cite[Proposition 3.3]{alderson2019note} and an upper bound $n< q^{\frac{k^2+k-4}{2}}$ for the length of MWS codes of dimension $k\geq 3$ is given in \cite[Corollary 5.9]{alderson-neri2019maximum} which only gives a shorter length than the length in  Proposition \ref{MWS bound K>=2} where $k=3$.
% \begin{prop}\cite[Corollary 3.12]{alderson2019note}
% There exists an $[n,k,1]_q$ strictly compact MWS code if and only if $n< 2^k$.
%  \end{prop}
%  % \begin{proof}
%  % The proof is given in \cite[Corollary 3.12]{alderson2019note}.
%  % \end{proof}
%  \begin{prop}\cite[Proposition 5.4]{alderson-neri2019maximum}
%  There exists an $[n,2,d]_q$ strictly compact MWS code for all prime powers $q$. 
%  \end{prop}

%\newpage

\section{Conclusions}
In this work we introduce the notion of spread of a code, a tool to study the fundamental parameters of a code w.r.t the weight distribution of a target code. More precisely, the spread is a measure of how much the weight distribution of a a code $C$ is distant from the weight distribution of a target code. We focus here on MWS codes, a class of codes studied in the past few years by several authors, and we apply our methods to study the parameters of known examples of MWS codes.  As a result of our approach, we are able to completely characterise the length of MWS codes according to the their spread and to provide bounds on their minimum distances (Proposition \ref{distMWS-prop1}). Moreover, we specialise our results to two sub-families, namely to compact (Corollary \ref{cor: parameters compact}) and strictly-compact  (Theorem \ref{theorem: parameters}) MWS codes. We believe that the results obtained for MWS codes are a hint for the usefulness of analysing the parameters of families of codes according to their spread.

% \section{Introduction}

% We prove a conjecture due to John Smith~\cite{Smith1950} concerning sequences with unusual properties.

% \begin{theorem}
% \label{theorem:seq}
% Sequences with unusual properties exist.
% \end{theorem}

% We shall prove Theorem~\ref{theorem:seq} using a new method, which we call the magical method.

% %%%%%%%%%%%%%%%%%%%%%%%%%%%%%%%%%%%%%%%%%%%%%%%%%%%%%%%

% \section{The magical method}

% In this section we describe our main method.

% \begin{lem}
% \label{lem:technical}
% If a sequence satisfies property A, then it satisfies property B.
% \end{lem}
% \begin{proof}
% Suppose for a contradiction that a sequences satifies property A, but does not property B. \dots
% \end{proof}

% %%%%%%%%%%%%%%%%%%%%%%%%%%%%%%%%%%%%%%%%%%%%%%%%%%%%%%%

% \section{Proof of Theorem~\ref{theorem:seq}}

% In this section we complete the proof of Theorem~\ref{theorem:seq}.

% \begin{proof}[Proof of Theorem~\ref{theorem:seq}]
% Argument \dots. This completes the proof of Theorem~\ref{theorem:seq}.
% \end{proof}

%% If you want to use figures, use the following.

% \begin{figure}[!h]
%   \begin{center}
%     % use \includegraphics to import figures 
%     % \includegraphics{filename}
%   \end{center}
%   \caption{\label{fig:InformativeFigure} Here is an informative
%     figure.}
% \end{figure}

%%%%%%%%%%%%%%%%%%%%%%%%%%%%%%%%%%%%%%%%%%%%%%%%%%%%%%%
\subsection*{Acknowledgements}
The authors would like to thank Dr. Chunlei Li for his helpful advice and comments. 

% Possible acknowledgements.

%%%%%%%%%%%%%%%%%%%%%%%%%%%%%%%%%%%%%%%%%%%%%%%%%%%%%%%

% \bibliographystyle{plain} 
% \bibliography{ref} 
% Use BibTeX to create a bibliography
% then copy and past the contents of your .bbl file into your .tex file

%\begin{thebibliography}{1}

% \bibitem{Smith1950}
% J.~Smith.
% \newblock A conjecture on sequences with unusual properties.
% \newblock {\em J. Major Results}, 5(2):100--200, 1950.

% \end{thebibliography}
\bibliographystyle{plain}

\bibliography{Refs}
\end{document}